\documentclass[a4paper,12pt]{amsart}


\usepackage[T1]{fontenc}
\usepackage{inputenc}
\usepackage[english]{babel}
\usepackage{amsmath}
\usepackage{latexsym}
\usepackage{marvosym}
\usepackage{amsfonts}
\usepackage{amsthm}
\usepackage{float}
\usepackage{color}
\usepackage{epsfig}
\usepackage{graphicx}

\usepackage{enumerate}

\usepackage{stmaryrd}

\usepackage{tikz}
\usetikzlibrary{trees}
\usepackage{verbatim}
\usetikzlibrary{shapes,arrows}
\usepackage{graphicx}


\setcounter{MaxMatrixCols}{10}

\setlength{\oddsidemargin}{0.1cm}   
\setlength{\evensidemargin}{-0.1cm}  
\setlength{\topmargin}{-1cm}  
\setlength{\textwidth}{15.cm} 
\setlength{\textheight}{22.cm}

\newcommand\N{\mathbb{N}}

\newcommand\E{\mathbb{E}}
\newcommand\F{\mathbb{F}}
\newcommand\R{\mathbb{R}}
\newcommand\Pro{\mathbb{P}}
\newcommand\T{\mathbb{T}}
\newcommand\G{\mathbb{G}}

\newcommand{\veps}{\varepsilon}

\newcommand\calF{\mathcal{F}}
\newcommand\calG{\mathcal{G}}
\newcommand\calB{\mathcal{B}}
\newcommand\calO{\mathcal{O}}
\newcommand\calN{\mathcal{N}}
\newcommand\calC{\mathcal{C}}
\newcommand\calV{\mathcal{V}}
\newcommand\calA{\mathcal{A}}
\newcommand\calW{\mathcal{W}}
\newcommand\calR{\mathcal{R}}
\newcommand\calT{\mathcal{T}}
\newcommand\calH{\mathcal{H}}
\newcommand\calK{\mathcal{K}}

\newcommand\wh{\widehat}

\newcommand\Var{\text{Var}}

\def\build#1_#2^#3{\mathrel{\mathop{\kern 0pt#1}\limits_{#2}^{#3}}}
\def\liml{\build{\longrightarrow}_{}^{{\mbox{$\mathcal L$}}}}

\def\H#1{\textup{\textbf{(H.\ref{#1})}}}

\numberwithin{equation}{section}
\theoremstyle{plain}
\newtheorem{TD}{Theorem-Definition}[section]

\newtheorem{Prop}[TD]{Proposition}
\newtheorem{Theo}[TD]{Theorem}

\newtheorem{Lem}[TD]{Lemma}
\newtheorem{Rem}[TD]{Remark}

\hyphenation{BINAR}

\begin{document}

\title[Asymptotic analysis for BINAR processes]
{Limit theorems for bifurcating integer-valued autoregressive processes}
\author{Vassili Blandin}
\dedicatory{\normalsize Universit\'e Bordeaux 1}



\begin{abstract}
We study the asymptotic behavior of the weighted least squares estimators of the unknown parameters of bifurcating integer-valued autoregressive processes. Under suitable assumptions on the immigration, we establish the almost sure convergence of our estimators, together with the quadratic strong law and central limit theorems. All our investigation relies on asymptotic results for vector-valued martingales.
\end{abstract}

\maketitle


\section{Introduction}


Bifurcating integer-valued autoregressive (BINAR) processes are an adaptation of integer-valued autoregressive (INAR) processes to binary
tree structured data. It can also be seen as the combination of INAR processes and bifurcating autoregressive (BAR) processes. BAR processes have been first introduced by Cowan and Staudte \cite{CowanStaudte} while INAR processes have been first investigated by Al-Osh and Alzaid \cite{AlOshAlzaid, AlzaidAlOsh} and McKenzie \cite{McKenzie}. BINAR processes take into account both inherited and environmental effects to explain the evolution of the integer-valued characteristic under study. We can easily see cell division as an example of binary tree structured, the integer-valued characteristic could then be, as an example, the number of parasites in a cell.\\

More precisely, the first-order BINAR process is defined as follows. The initial cell is labelled $1$ and the offspring of the cell labelled $n$ are labelled $2n$ and $2n+1$. Denote by $X_n$ the integer-valued characteristic of individual $n$. Then, the first-order BINAR process is given, for all $n\geq1$, by
\begin{equation*}
\begin{cases}
X_{2n} &= a \circ X_n + \veps_{2n}\\
X_{2n+1} &= b \circ X_n + \veps_{2n+1}
\end{cases}
\end{equation*}
where the thinning operator $\circ$ is defined in \eqref{thinning}. The immigration sequence $(\veps_{2n},\veps_{2n+1})_{n\geq1}$ represents the environmental effect, while the thinning operator represents the inherited effect. The example of the cell division incites us to suppose that $\veps_{2n}$ ans $\veps_{2n+1}$ are correlated since the environmental effect on two sister cells can reasonably be seen as correlated.\\

The purpose of this paper is to study the asymptotic behavior of the weighted least squares (WLS) estimators of first-order BINAR process via a martingale approach. The martingale approach has been first proposed by Bercu et al.~\cite{BercuBDSAGP} and de Saporta et al.~\cite{BDSAGPMarsalle} for BAR processes. We also refer to Wei and Winnicki \cite{WeiWinnicki} and Winnicki \cite{Winnicki} for the WLS estimation of parameters associated to branching processes. We shall make use of the strong law of large numbers \cite{Duflo} as well as the central limit theorem \cite{Duflo,HallHeyde} for martingales, in order to investigate the asymptotic behavior of the WLS estimators, as previously done by Basawa and Zhou \cite{BasawaZhou,ZhouBasawa,ZhouBasawa2}.\\

Several points of view appeared for both BAR and INAR processes and we tried to make a link between those approaches. On the one hand, for the BAR side of the BINAR process, we had a look to classical BAR studies as done by Huggins and Basawa \cite{HugginsBasawa99,HugginsBasawa2000} and Huggins ans Staudte \cite{HugginsStaudte} who studied the evolution of cell diameters and lifetimes, but also to bifurcating Markov chains models introduced by Guyon \cite{Guyon} and used in Delmas and Marsalle \cite{DelmasMarsalle}. However, we did not put aside the analogy with the Galton-Watson processes as studied in Delmas and Marsalle \cite{DelmasMarsalle} and Heyde and Seneta \cite{HeydeSeneta}. On the other hand, concerning the INAR side of the BINAR process, we used the classical INAR definition but also had a look to Bansaye \cite{Bansaye} who studied an integer-valued process on a binary tree without using an INAR model, and also Kachour and Yao \cite{KachourYao} who decided to study an integer-valued autoregressive process by a rounding approach instead of the classical INAR one.\\

The paper is organised as follows. Section 2 is devoted to the presentation of the first-order BINAR process while Section 3 deals with the WLS estimators of the unknown parameters. Section 4 allows us to detail our approach based on martingales. Section 5 gathers the main results about the asymptotic properties of the WLS estimators. More precisely, we will propose the almost sure convergence, the quadratic strong law and the central limit theorem for our estimates. The rest of the paper is devoted to the proofs of our main results.


\section{Bifurcating integer-valued autoregressive processes}\label{section2}


Consider the first-order BINAR process given, for all $n\geq1$, by
\begin{equation}
\label{orisyst}
\begin{cases}
X_{2n} &= a \circ X_n + \veps_{2n}\\
X_{2n+1} &= b \circ X_n + \veps_{2n+1}
\end{cases}
\end{equation}
where
the initial integer-valued state $X_1$ is the ancestor of the process and $(\veps_{2n},\veps_{2n+1})$ represents the immigration which takes 
nonnegative integer values. In all the sequel, we shall assume that $\E[X_1^8]<\infty$. Moreover,
\begin{equation}\label{thinning}
\displaystyle{a \circ X_{n} = \sum_{i=1}^{X_n} Y_{n,i}} \hspace{15pt} \text{ and } \hspace{15pt} \displaystyle{b \circ X_{n} = \sum_{i=1}^{X_n} Z_{n,i}}
\end{equation}
where $(Y_{n,i})_{n,i \geq1}$ and $(Z_{n,i})_{n,i \geq1}$ are two independent sequences of i.i.d., nonnegative integer-valued random variables with means $a$ and $b$ and positive variances $\sigma_a^2$ and $\sigma_b^2$ respectively. Moreover, $\mu_a^4$, $\mu_b^4$ and $\tau_a^6$, $\tau_b^6$ are the fourth-order and the sixth-order centered moments of $(Y_{n,i})$ and $(Z_{n,i})$, respectively, and $(Y_{n,i})$ and $(Z_{n,i})$ admit eighth-order moments. We also assume that the two offspring sequences $(Y_{n,i})$ and $(Z_{n,i})$ are independent 
of the immigration $(\veps_{2n},\veps_{2n+1})$. In addition, as in the literature concerning BAR processes, we shall assume that 
$$0<\max(a,b)<1.$$

\noindent One can see this BINAR process as a first-order integer-valued autoregressive process on a binary tree, where each node 
represents an individual, node 1 being the original ancestor. For all $n\geq1$, denote the $n$-th generation by
$$\G_n = \{2^n, 2^{n+1}, \hdots, 2^{n+1}-1\}.$$

\noindent In particular, $\G_0 = \{1\}$ is the initial generation and $\G_1 = \{2,3\}$ is the first generation of offspring 
from the first ancestor. Let $\G_{r_n}$ be the generation of individual $n$, which means that $r_n=[\log_2(n)]$. Recall that the two 
offspring of individual $n$ are labelled $2n$ and $2n+1$, or conversely, the mother of individual $n$ is $[n/2]$ where $[x]$ stands for 
the largest integer less than or equal to $x$. Finally 
denote by
$$\T_n = \bigcup_{k=0}^n \G_n$$

\noindent the sub-tree of all individuals from the original individual up to the $n$-th generation. On can observe that the cardinality $|\G_n|$ 
of $\G_n$ is $2^n$ while that of $\T_n$ is $|\T_n| = 2^{n+1}-1$.\\

\begin{figure}[h!]
\hspace{-2cm}
\includegraphics[scale=0.7]{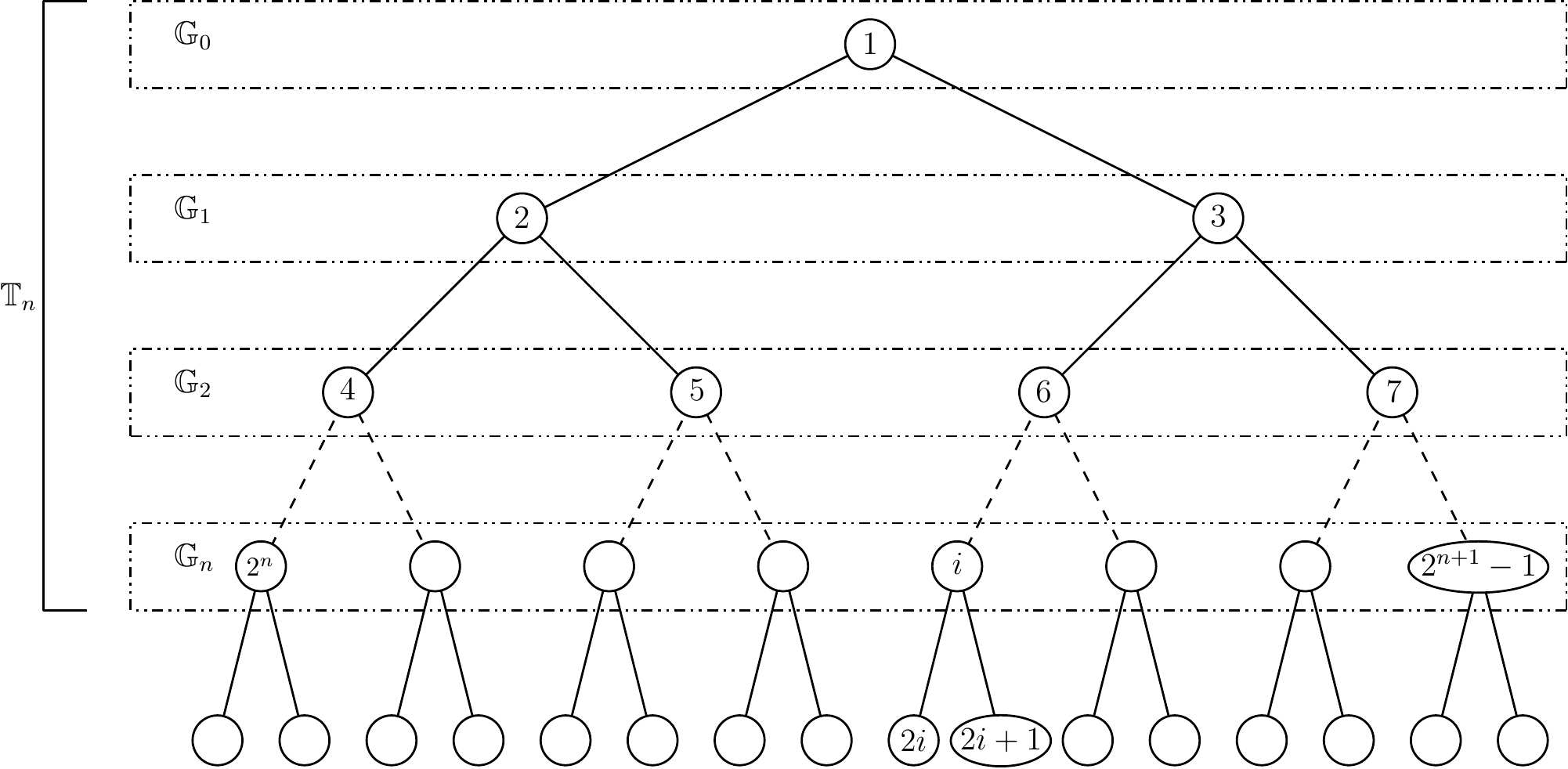}
\caption{The tree associated with the BINAR}
\end{figure}


\section{Weighted least-squares estimation}


Denote by $\F = (\calF_n)_{n\geq0}$ the natural filtration associated with the first-order BINAR process, which means that 
$\calF_n$ is the $\sigma$-algebra generated by all individuals up to the $n$-th generation, in other words $\calF_n=\sigma\{X_k,k\in\T_n\}$. We will assume in all the sequel that, for all $n\geq0$ and for all $k\in \G_n$,
\begin{equation*}
\begin{cases}
\E[\veps_{2k}|\calF_n] = c \hspace{20pt} \text{a.s.}\\
 \E[\veps_{2k+1}|\calF_n] = d \hspace{20pt} \text{a.s.}
\end{cases}
\end{equation*}

\noindent Consequently, we deduce from \eqref{orisyst} that, for all $n\geq0$ and for all $k\in \G_n$,
\begin{equation}
\label{linsyst}
\begin{cases}
X_{2k} &= a X_k + c + V_{2k},\\
X_{2k+1} &= b X_k + d + V_{2k+1},
\end{cases}
\end{equation}

\noindent where, $V_{2k} = X_{2k} - \E[X_{2k}|\calF_{n}]$ and $V_{2k+1} = X_{2k+1} - \E[X_{2k+1}|\calF_{n}]$. Therefore, the two relations given by \eqref{linsyst} can be rewritten in the matrix form
\begin{equation}
 \label{matsyst}
 \chi_n = \theta^t\Phi_n + W_n
\end{equation}
where
$$
\begin{array}{ccccc}
 \chi_n = \begin{pmatrix} X_{2n}\\ X_{2n+1} \end{pmatrix},
  & & \Phi_n = \begin{pmatrix} X_{n}\\ 1 \end{pmatrix},
  & & W_n = \begin{pmatrix} V_{2n}\\ V_{2n+1} \end{pmatrix},
\end{array}
$$
and the matrix parameter
$$\theta = \begin{pmatrix} a & b \\ c & d \end{pmatrix}.$$

\noindent Our goal is to estimate $\theta$ from the observation of all individuals up to $\T_n$. We propose 
to make use of the WLS estimator $\wh{\theta}_n$ of $\theta$ which minimizes
$$\Delta_n(\theta) = \frac12\sum_{k \in \T_{n-1}} \frac1{c_k}\|\chi_k-\theta^t \Phi_k\|^2$$

\noindent where the choice of the weighting sequence $(c_n)_{n\geq1}$ is crucial. We shall choose $c_n=1+X_n$ and we will go back to this suitable choice in Section \ref{martappro}. Consequently, we obviously have for all $n\geq1$
\begin{equation}
\label{eqest}
\wh{\theta}_n = S_{n-1}^{-1} \sum_{k\in\T_{n-1}} \frac1{c_k}\Phi_k\chi_k^t
\end{equation}

\noindent where
$$ S_n = \sum_{k\in\T_n} \frac1{c_k} \Phi_k \Phi_k^t.$$

\noindent In order to avoid useless invertibility assumption, we shall assume, without loss of generality, that for all $n\geq0$, $S_n$ 
is invertible. Otherwise, we only have to add the identity matrix of order 2, $I_2$ to $S_n$. In all what follows, we shall make a slight 
abuse of notation by identifying $\theta$ as well as $\wh\theta_n$ to
$$\begin{array}{ccccc}
 \text{vec}(\theta) = \begin{pmatrix} a \\ c \\  b \\ d \end{pmatrix} & & \text{and} & & 
 \text{vec}(\wh \theta_n) = \begin{pmatrix} \wh{a}_n \\ \wh{c}_n \\ \wh{b}_n \\ \wh{d}_n \end{pmatrix}.
\end{array}$$

\noindent Therefore, we deduce from \eqref{eqest} that
\begin{align}
 \wh{\theta}_n &= \Sigma_{n-1}^{-1} \sum_{k\in\T_n-1} \frac1{c_k}\text{vec}(\Phi_k\chi_k^t) \nonumber, \\
  &= \Sigma_{n-1}^{-1} \sum_{k\in\T_n-1} \frac1{c_k} \begin{pmatrix} X_kX_{2k} \\ X_{2k} \\ X_kX_{2k+1} \\ X_{2k+1} \end{pmatrix} \nonumber
\end{align}

\noindent where $\Sigma_n = I_2 \otimes S_n$ and $\otimes$ stands for the standard Kronecker product. Consequently, \eqref{matsyst} yields to
\begin{align}
 \wh{\theta}_n -\theta &= \Sigma_{n-1}^{-1} \sum_{k\in\T_{n-1}} \frac1{c_k}\text{vec}(\Phi_k W_k^t), \nonumber \\
  \label{diffest} &= \Sigma_{n-1}^{-1} \sum_{k\in\T_{n-1}} \frac1{c_k} \begin{pmatrix} X_kV_{2k} \\ V_{2k} \\ X_kV_{2k+1} 
									    \\ V_{2k+1} \end{pmatrix}.
\end{align}

\noindent In all the sequel, we shall make use of the following moment hypotheses.

\begin{enumerate}[\bf{({H}.}1)]
 \item  \label{eps1} For all $n\geq0$ and for all $k\in \G_n$
$$\begin{array}{ccccccc}
  \E[\veps_{2k}|\calF_n] = c & & \text{and} & & \E[\veps_{2k+1}|\calF_n] = d & & \text{a.s.}
\end{array}$$
 \item \label{eps2} For all $n\geq0$ and for all $k\in \G_n$
$$\begin{array}{ccccccc}
  \Var[\veps_{2k}|\calF_n] = \sigma_c^2 > 0 & \text{and} &  \Var[\veps_{2k+1}|\calF_n] = \sigma_d^2 > 0 & & \text{a.s.}
\end{array}$$
 \item \label{eps11} For all $n\geq0$ and for all $k,l\in\G_{n+1}$, if $[k/2] \neq [l/2]$,
$\veps_k$ and $\veps_l$ are conditionally independent given $\calF_n$, while otherwise it exists $\rho^2 < \sigma_c^2 \sigma_d^2$ such that, for all $k\in\G_n$
$$\E[(\veps_{2k}-c)(\veps_{2k+1}-d)|\calF_n] = \rho \hspace{20pt} \text{ a.s.}$$
 \item \label{eps4} One can find $\mu_c^4>\sigma_c^4$ and $\mu_d^4>\sigma_d^4$ such that, for all $n\geq0$ and for all $k\in \G_n$
$$\begin{array}{ccccccc}
  \E\left[\left(\veps_{2k}-c\right)^4|\calF_n\right] = \mu_c^4 & & \text{and} & & \E\left[\left(\veps_{2k+1}-d\right)^4|\calF_n\right] = \mu_d^4 & & \text{a.s.}
\end{array}$$
In addition, it exists $\nu^4 \leq \mu_c^4 \mu_d^4$ such that, for all $k\in\G_n$
$$\E[(\veps_{2k}-c)^2(\veps_{2k+1}-d)^2|\calF_n] = \nu^2 \hspace{20pt} \text{ a.s.}$$
 \item \label{eps8} One can find $\tau_c^6>0$ and $\tau_d^6>0$ such that
$$\sup_{n\geq1}\sup_{k\in\G_n}\E[\veps_{2k}^6|\calF_n]=\tau_c^6 \hspace{20pt} \text{and} \hspace{20pt} \sup_{n\geq1}\sup_{k\in\G_n}\E[\veps_{2k+1}^6|\calF_n]=\tau_d^6 \hspace{20pt} \text{a.s.}$$
$$\sup_{n\geq2}\E[\veps_n^8]<\infty$$
\end{enumerate}

\vspace{25pt}

\noindent It follows from hypothesis \H{eps1} that $V_{2n}$ and $V_{2n+1}$ can be rewritten as
$$\begin{array}{ccc}
   \displaystyle V_{2n} = \sum_{i=1}^{X_n}(Y_{n,i}-a) + (\veps_{2n}-c) & \text{ and } & \displaystyle V_{2n+1} = \sum_{i=1}^{X_n}(Z_{n,i}-b) + (\veps_{2n}-d).
  \end{array}
$$

\noindent Hence, under assumption \H{eps2}, we have for all $n\geq0$ and for all $k\in\G_n$
\begin{eqnarray}
   \E[V_{2k}^2|\calF_{n}] = \sigma_a^2X_k + \sigma_c^2 & \text{ and } & \E[V_{2k+1}^2|\calF_{n}] = \sigma_b^2X_k + \sigma_d^2 \text{\hspace{20pt} a.s.} \label{espV2}
  \end{eqnarray}

\noindent Consequently, if we choose $c_n=1+X_n$ for all $n\geq1$, we clearly have for all $k\in\G_n$
$$\begin{array}{cccc}
\E\left[\left.V_{2k}^2\right|\calF_{n}\right] \leq \max(\sigma_a^2,\sigma_c^2)c_k & \text{ and } &
\E\left[\left.V_{2k+1}^2\right|\calF_{n}\right] \leq \max(\sigma_b^2,\sigma_d^2)c_k &  \text{ a.s.}
\end{array}$$

\noindent It is exactly the reason why we have chosen this weighting sequence into \eqref{eqest}. Similar WLS estimation approach for branching processes with immigration may be found in \cite{WeiWinnicki} and \cite{Winnicki}. We can also observe that, for all $k\in\G_n$, under the assumption \H{eps11}
$$\begin{array}{cc} \rho = \E[V_{2k}V_{2k+1}|\calF_n] & \text{ a.s.} \end{array}$$
Hence, we propose to estimate the conditional covariance $\rho$ by
\begin{equation}
\wh \rho_n = \frac1{|\T_{n-1}|} \sum_{k\in\T_{n-1}} \wh V_{2k} \wh V_{2k+1}
\end{equation}
where for all $k\in\G_n$, 
\begin{equation*}
\begin{cases}
\wh V_{2k} &= X_{2k} - \wh a_n X_k - \wh c_n,\vspace{1ex}\\
 \wh V_{2k+1} &= X_{2k+1} - \wh b_n X_k - \wh d_n.
\end{cases}
\end{equation*}
For all $n\geq1$, denote $v_{2n} = V_{2n}^2 - \E[V_{2n}^2 | \calF_n]$. We deduce from \eqref{espV2} that for all $n\geq1$ 
$$V_{2n}^2 = \eta^t \Phi_n + v_{2n}$$

\noindent where $\eta^t = \begin{pmatrix} \sigma_a^2 & \sigma_c^2 \end{pmatrix}$. It leads us to estimate the vector of variances $\eta$ by the WLS estimator
\begin{equation} \wh \eta_n = Q_{n-1}^{-1} \sum_{k\in \T_{n-1}} \frac{1}{d_k} \wh V_{2k}^2 \Phi_k \label{esteta} \end{equation}
where 
$$ Q_n = \sum_{k\in\T_n} \frac1{d_k} \Phi_k \Phi_k^t$$

\noindent and the weighting sequence $(d_n)_{n\geq1}$ is given, for all $n\geq1$, by $d_n=(1+X_n)^2$. This choice is due to the fact that for all $n\geq1$ and for all $k\in\G_n$
\begin{align}
\E[v_{2k}^2|\calF_n] &= \E[V_{2k}^4|\calF_n] - \left(\E[V_{2k}^2|\calF_n]\right)^2 \hspace{20pt} \text{ a.s.}\nonumber\\
&= 2 \sigma_a^4X_k^2+(\mu_a^4-3\sigma_a^4 +4\sigma_a^2\sigma_c^2)X_k + \mu_c^4-\sigma_c^4 \hspace{20pt} \text{ a.s.} \label{Ev2k}
\end{align}

\noindent where we recall that $\mu_a^4$ is the fourth-order centered moment of $(Y_{n,i})$. Consequently, as $d_n\geq1$, we clearly have for all $n\geq1$ and for all $k\in\G_n$
$$\E[v_{2k}^2|\calF_n] \leq (\mu_a^4-\sigma_a^4+4\sigma_a^2\sigma_c^2 +\mu_c^4-\sigma_c^4) d_k \hspace{20pt} \text{ a.s.}$$

\noindent We have a similar WLS estimator $\wh\zeta_n$ of the vector of variances $\zeta^t =\begin{pmatrix} \sigma_b^2 & \sigma_d^2 \end{pmatrix}$ by replacing $\wh V_{2k}^2$ by $\wh V_{2k+1}^2$ into \eqref{esteta}.


\section{A martingale approach}\label{martappro}


In order to establish all the asymptotic properties of our estimators, we shall make use of a martingale approach. For all $n\geq1$, denote
$$ M_n = \sum_{k\in\T_{n-1}} \frac1{c_k} \begin{pmatrix} X_kV_{2k} \\ V_{2k} \\ X_kV_{2k+1} \\ V_{2k+1} \end{pmatrix}. $$

\noindent We can clearly rewrite \eqref{diffest} as
\begin{equation}\wh\theta_n - \theta = \Sigma_{n-1}^{-1} M_n.\label{difftheta}\end{equation}

\noindent As in \cite{BercuBDSAGP}, we make use of the notation $M_n$ since it appears that $(M_n)_{n\geq1}$ a martingale. This fact is a crucial point of our study and it justifies the vector notation since most of asymptotic results for martingales were established for vector-valued martingales. Let us rewrite $M_n$ in order to emphasize its martingale quality. Let $\Psi_n = I_2 \otimes \varphi_n$ 
where $\varphi_n$ is the matrix of dimension $2\times2^n$ given by
\begin{equation*}\varphi_n = \begin{pmatrix} \displaystyle \frac{X_{2^n}}{\sqrt{c_{2^n}}} & \displaystyle \frac{X_{2^n+1}}{\sqrt{c_{2^n+1}}} & \displaystyle \hdots & 
		\displaystyle \frac{X_{2^{n+1}-1}}{\sqrt{c_{2^{n+1}-1}}} \vspace{5pt} \\
	    \displaystyle \frac1{\sqrt{c_{2^n}}} & \displaystyle \frac1{\sqrt{c_{2^n+1}}} & \displaystyle \hdots & 
		\displaystyle \frac1{\sqrt{c_{2^{n+1}-1}}}
\end{pmatrix}.\end{equation*}

\noindent It represents the individuals of the $n$-th generation which is also the collection of all $\Phi_k/\sqrt{c_k}$ where $k$ belongs to $\G_n$. Let $\xi_n$ 
be the random vector of dimension $2^n$
$$\xi_n^t = \begin{pmatrix} \displaystyle\frac{V_{2^n}}{\sqrt{c_{2^{n-1}}}} & \displaystyle\frac{V_{2^n+2}}{\sqrt{c_{2^{n-1}+1}}} & \hdots &
			   \displaystyle\frac{V_{2^{n+1}-2}}{\sqrt{c_{2^{n}-1}}} &  \displaystyle\frac{V_{2^n+1}}{\sqrt{c_{2^{n-1}}}} &
			  \displaystyle\frac{V_{2^n+3}}{\sqrt{c_{2^{n-1}+1}}} & \hdots & \displaystyle\frac{V_{2^{n+1}-1}}{\sqrt{c_{2^{n}-1}}}
	    \end{pmatrix}
$$

\noindent The vector $\xi_n$ gathers the noise variables of $\G_n$. The special ordering separating odd and even indices has been made in \cite{BercuBDSAGP}
so that $M_n$ can be written as
$$M_n = \sum_{k=1}^n \Psi_{k-1} \xi_k$$

\noindent Under \H{eps1}, we clearly have for all $n\geq0$, $\E[\xi_{n+1}|\calF_n] = 0$ a.s.~and $\Psi_n$ is $\calF_n$-measurable. In addition it is 
not hard to see that under \H{eps1} to \H{eps11}, $(M_n)$ is a locally square integrable vector martingale with increasing 
process given, for all $n\geq1$, by
\begin{align}
<\!M\!>_n &= \sum_{k=0}^{n-1} \Psi_k \E[\xi_{k+1}\xi_{k+1}^t|\calF_k]\Psi_k^t =\sum_{k=0}^{n-1} L_k \hspace{20pt} \text{a.s.} \label{defcrochetM}
\end{align}

\noindent where 
\begin{equation}\label{defLk}
    L_k = \sum_{i\in\G_{k}} \frac1{c_i^2} \begin{pmatrix} \sigma_a^2X_i+\sigma_c^2 & \rho \\ \rho & \sigma_b^2X_i+\sigma_d^2 \end{pmatrix}\otimes\begin{pmatrix} X_i^2 & X_i \\ X_i & 1\end{pmatrix}.
\end{equation}

\noindent It is necessary to establish the convergence of $<\!M\!>_n$, properly normalized, in order to prove the asymptotic results for our BINAR estimators $\wh\theta_n$, $\wh\eta_n$ and $\wh\zeta_n$. Since the sizes of $\Psi_n$ and $\xi_n$ double at each generation, we have to adapt the proof of 
vector-valued martingale convergence given in \cite{Duflo} to our framework.


\section{Main results} \label{mainresults}


In all the sequel, we will assume that $\Pro_{\veps_{2n}}$ and $\Pro_{\veps_{2n+1}}$ do not depend on $n$. However, we shall get rid of the standard assumption commonly used in the INAR literature that the offspring sequences $(Y_{n,i})$ and $(Z_{n,i})$ share the same Bernoulli distribution. The only assumption that we will use here is that the offspring sequences $(Y_{n,i})$ and $(Z_{n,i})$ admit eighth-order moments. We have to introduce some more notations in order to state our main results. From the original process $(X_n)_{n\geq1}$, we shall define a new process $(Y_n)_{n\geq1}$ recursively defined by $Y_1=X_1$, and if $Y_n=X_k$ with $n,k\geq1$, then 
$$Y_{n+1} = X_{2k+\kappa_n}$$

\noindent where $(\kappa_n)_{n\geq1}$ is a sequence of i.i.d.~random variables with Bernoulli $\calB\left(1/2\right)$ distribution. Such a construction may be found in \cite{Guyon} for the asymptotic analysis of BAR processes. The process $(Y_n)$ gathers the values of the original process $(X_n)$ along the random branch of the binary tree $(\T_n)$ given by $(\kappa_n)$. Denote by $k_n$ the unique $k\geq1$ such that $Y_n=X_k$. Then, for all $n\geq1$, we have
\begin{equation} Y_{n+1} = a_{n+1} \circ Y_{n} + e_{n+1} \label{defYn1}\end{equation}
where
\begin{equation}
\begin{array}{ccc}
 a_{n+1} = \begin{cases}
a \text{ if } \kappa_n = 0 \\
   b \text{ otherwise}
\end{cases}
 \hspace{25pt} \text{and} \hspace{25pt} &
 e_n = \veps_{k_n}.
\end{array} \label{defYn2}
\end{equation}

\begin{Lem} \label{CVYn}
 Assume that $(\veps_n)$ satisfies \H{eps1} to \H{eps4}. Then, we have
\begin{equation*}
 Y_n \liml T 
\end{equation*}
where $T$ is a positive non degenerate integer-valued random variable with $\E[T^3]<\infty$.
\end{Lem}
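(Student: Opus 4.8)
The plan is to recognise $(Y_n)_{n\ge1}$ as a time-homogeneous Markov chain on $\N$ and to prove its convergence in law through a contraction estimate for the associated transition kernel. First I would check the Markov property. Along the random branch selected by $(\kappa_n)$ one never revisits a node, so at each step the offspring variables $(Y_{k_n,i}),(Z_{k_n,i})$ and the immigration $e_{n+1}=\veps_{2k_n+\kappa_n}$ are independent of the past $\sigma$-field $\calG_n=\sigma\{Y_1,\dots,Y_n,\kappa_1,\dots,\kappa_{n-1}\}$. Because $\Pro_{\veps_{2n}}$ and $\Pro_{\veps_{2n+1}}$ do not depend on $n$ and the offspring sequences are i.i.d.\ across nodes, the conditional law of $Y_{n+1}$ given $\calG_n$ depends only on $Y_n$ and is the same at every step; hence $(Y_n)$ is a homogeneous Markov chain with one-step recursion \eqref{defYn1}. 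Using \H{eps1} and $\E[a_{n+1}\mid\calG_n]=(a+b)/2$, a direct computation gives
\[
\E[Y_{n+1}\mid\calG_n]=\frac{a+b}{2}\,Y_n+\frac{c+d}{2}\qquad\text{a.s.},
\]
and the contraction factor $\lambda:=(a+b)/2\le\max(a,b)<1$ is the quantity that drives everything below.

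Next I would establish convergence in law by a coupling argument in the Wasserstein-$1$ distance $W_1$ on the space $\calP_1(\N)$ of probability measures on $\N$ with finite first moment (a complete metric space). Given two starting values $Y_0,Y_0'$ I couple one transition by reusing the same $\kappa$, the same immigration variable, and the same offspring variables indexed $i=1,2,\dots$ (the monotone thinning coupling); the immigration then cancels in the difference and
\[
\E\bigl[\,|Y_1-Y_1'|\ \big|\ Y_0,Y_0'\bigr]=\frac{a+b}{2}\,|Y_0-Y_0'|.
\]
Taking expectations and then the infimum over couplings of the initial laws shows that the transition kernel is a $\lambda$-contraction on $(\calP_1(\N),W_1)$. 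By the Banach fixed point theorem it has a unique invariant law $\mu$, and the laws $\mu_n=\mathcal{L}(Y_n)$ satisfy $W_1(\mu_n,\mu)\le\lambda^{\,n}W_1(\mu_0,\mu)\to0$; since $W_1$-convergence implies convergence in distribution, we obtain $Y_n\liml T$ with $\mathcal{L}(T)=\mu$. That $T$ is integer-valued is immediate, while positivity and non-degeneracy follow from \H{eps2}: the immigration has positive conditional variance, so the invariant law is not a Dirac mass and charges the positive integers.

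It remains to control $\E[T^3]$. I would prove the uniform bound $\sup_{n\ge1}\E[Y_n^3]<\infty$ by a triangular recursion. Expanding $\E[(a_{n+1}\circ Y_n+e_{n+1})^3\mid\calG_n]$ and averaging over $\kappa_n$, the coefficient of $Y_n^3$ equals $(a^3+b^3)/2\le\max(a,b)^3<1$, while the remaining terms are bounded by $C(1+\E[Y_n^2]+\E[Y_n])$; here one uses that the offspring admit moments of order three (indeed eighth) and that the immigration has finite third moments by \H{eps8}. Treating the first and second moments first by the same scheme (each with a contraction factor $(a+b)/2$, resp.\ $(a^2+b^2)/2$, strictly below $1$) yields uniform bounds on $\E[Y_n]$ and $\E[Y_n^2]$, hence a relation $u_{n+1}\le\lambda_3u_n+K$ with $u_n=\E[Y_n^3]$, $\lambda_3<1$ and $K<\infty$; together with $\E[Y_1^3]=\E[X_1^3]<\infty$ (guaranteed by $\E[X_1^8]<\infty$) this gives $\sup_n\E[Y_n^3]<\infty$. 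Finally, since $x\mapsto x^3$ is nonnegative, the portmanteau theorem (Fatou) yields $\E[T^3]\le\liminf_n\E[Y_n^3]<\infty$.

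The main obstacle I expect is the rigorous justification of the homogeneous Markov property together with the coupling: one must argue carefully that following the random branch produces, at each step, fresh thinning and immigration variables independent of the past and with $n$-independent law, and then build a single coupling that simultaneously realises the optimal initial coupling and the monotone thinning coupling. Once this is in place, the contraction estimates and the moment bookkeeping for the thinning operator are routine.
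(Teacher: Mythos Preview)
Your argument is correct and conceptually clean, but it differs substantially from the paper's proof. The paper does not pass through the Markov property or a Wasserstein contraction; instead it unfolds the recursion, uses exchangeability of the pairs $(a_k,e_k)$ to replace $Y_n$ by the equal-in-law ``reversed'' variable
\[
Z_n=a_2\circ\cdots\circ a_n\circ Y_1+\sum_{k=2}^{n}a_2\circ\cdots\circ a_{k-1}\circ e_k,
\]
and then shows directly that $Z_n\to T:=\sum_{k\ge2}a_2\circ\cdots\circ a_{k-1}\circ e_k$ almost surely (the first term vanishes by a Borel--Cantelli/monotone convergence argument, the partial sums are nondecreasing with bounded mean). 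Non-degeneracy is obtained from $\Var(T)\ge\Var(e_2)>0$, and $\E[T^3]<\infty$ from the triangle inequality together with the bound $\E[(a_2\circ\cdots\circ a_n\circ e_{n+1})^p]\le\xi\,\E[e_2^p]\,\overline a^{\,n-1}$.

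Your route buys generality and avoids the explicit bookkeeping, but the paper's explicit series representation of $T$ is not merely cosmetic: it is reused in the proof of Lemma~\ref{LFGN} (to estimate $|Q^ng(x)|$ via $\E_x[(Z_n-T)^2]$) and in Theorem~\ref{TCL} (to compute $\E[T^2]$ in closed form). If you keep your proof, you would still have to recover that representation somewhere. One small slip: you invoke \H{eps8} for the third moment of the immigration, but the lemma only assumes \H{eps1}--\H{eps4}; hypothesis \H{eps4} already gives a finite fourth (hence third) conditional moment, which is all you need.
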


\noindent Denote
$\calC_{3}^1(\R_+) = \Bigl\{f\in\calC^1(\R_+,\R)\big|\exists \gamma >0, \forall x\geq0, (|f'(x)| + |f(x)|) \leq \gamma(1+x^3)\Bigl\}$.

\begin{Lem}\label{LFGN}
Assume that $(\veps_n)$ satisfies \H{eps1} to 
\H{eps8}. Then, for all $f\in \calC_{3}^1(\R_+)$, we have
$$\lim_{n\to\infty} \frac1{|\T_n|} \sum_{k\in \T_n} f(X_k) = \E[f(T)] \hspace{20pt}  \text{a.s.}$$
\end{Lem}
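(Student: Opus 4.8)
The plan is to reduce the tree average to averages over single generations and then to treat each generation average by an $L^2$/Borel--Cantelli argument built on the lineage chain $(Y_n)$ and its transition operator. Writing $M_{\G_p}(f)=\frac1{|\G_p|}\sum_{k\in\G_p}f(X_k)$ and using $|\G_p|=2^p$, $|\T_n|=2^{n+1}-1$, one has $\frac1{|\T_n|}\sum_{k\in\T_n}f(X_k)=\frac1{2^{n+1}-1}\sum_{p=0}^n 2^p M_{\G_p}(f)$. The weights $2^p/(2^{n+1}-1)$ form a Toeplitz array (nonnegative, summing to $1$, each fixed index contributing $o(1)$), so by the Toeplitz lemma it suffices to prove the generation-wise strong law $M_{\G_p}(f)\to\E[f(T)]$ a.s. This is the whole problem.

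Next I would introduce the transition operator $\calP$ of the lineage chain from \eqref{defYn1}--\eqref{defYn2}, namely $\calP f(x)=\frac12\E[f(a\circ x+\veps_{2})]+\frac12\E[f(b\circ x+\veps_{3})]$ (well defined and independent of position since the immigration laws do not depend on $n$), and record the identity $\E[M_{\G_{p}}(f)\mid\calF_{p-1}]=M_{\G_{p-1}}(\calP f)$ a.s., which follows from \eqref{orisyst} and the conditional independence, given $\calF_{p-1}$, of the offspring and immigration attached to distinct cells of $\G_{p-1}$. Iterating gives $M_{\G_n}(f)=\calP^n f(X_1)+\sum_{j=1}^n\delta_j(\calP^{n-j}f)$, where $\delta_j(h)=M_{\G_j}(h)-M_{\G_{j-1}}(\calP h)$ satisfies $\E[\delta_j(h)\mid\calF_{j-1}]=0$. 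Since the cells of $\G_{j-1}$ reproduce independently given $\calF_{j-1}$, the conditional variance of $\delta_j(h)$ is a sum of $2^{j-1}$ independent contributions divided by $2^{2j}$; bounding each through $|h(y)|\le\gamma_h(1+y^3)$ and the polynomial dependence of $\E[X_{2k}^6\mid\calF_{j-1}]$ on $X_k$ yields $\E[\delta_j(h)^2\mid\calF_{j-1}]\le C\,\gamma_h^2\,2^{-j}M_{\G_{j-1}}(g)$ with $g(x)=1+x^6$. The subcriticality $\max(a,b)<1$ together with \H{eps4}--\H{eps8} and $\E[X_1^8]<\infty$ give $\sup_k\E[X_k^6]<\infty$, hence $\sup_p\E[M_{\G_p}(g)]<\infty$ and $\E[\delta_j(h)^2]\le C\gamma_h^2\,2^{-j}$.

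Finally I would combine these estimates. The increments $\delta_j(\calP^{n-j}f)$ are mutually orthogonal and orthogonal to the $\calF_0$-measurable term, so $\E[(M_{\G_n}(f)-\E M_{\G_n}(f))^2]=\Var(\calP^n f(X_1))+\sum_{j=1}^n\E[\delta_j(\calP^{n-j}f)^2]$. Because $\delta_j$ annihilates constants, $\delta_j(\calP^{n-j}f)=\delta_j(\calP^{n-j}f-\E[f(T)])$, so the geometric ergodicity estimate $\gamma_{\calP^m f-\E[f(T)]}\le C\beta^m$ (for some $\beta<1$) gives $\E[\delta_j(\calP^{n-j}f)^2]\le C2^{-j}\beta^{2(n-j)}$ and also $\Var(\calP^n f(X_1))\le C\beta^{2n}$ and $\E[M_{\G_n}(f)]=\E[\calP^n f(X_1)]\to\E[f(T)]$. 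Summing over $j$ yields $\E[(M_{\G_n}(f)-\E[f(T)])^2]\le C\rho^n$ with $\rho=\max(1/2,\beta^2)<1$. This geometric, hence summable, bound lets Markov's inequality and Borel--Cantelli upgrade the $L^2$ convergence to $M_{\G_p}(f)\to\E[f(T)]$ a.s., and the Toeplitz reduction concludes.

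I expect the main obstacle to be the geometric ergodicity estimate $\gamma_{\calP^m f-\E[f(T)]}\le C\beta^m$, i.e. the quantitative forgetting of its initial value by the lineage chain in a norm adapted to cubic growth. This is exactly where $f\in\calC_3^1(\R_+)$ is used through the bound on $f'$: coupling two copies of $(Y_n)$ started from $x$ and $x'$, the thinning operator contracts their difference in mean by the factor $\max(a,b)<1$ per generation, and the control $|f(y)-f(y')|\le\gamma(1+\max(y,y')^3)|y-y'|$ combined with the uniform moment bounds of the previous step turns this contraction into a geometric rate (and identifies the stationary law with that of $T$ from Lemma~\ref{CVYn}). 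Everything else is moment bookkeeping already guaranteed by \H{eps4}--\H{eps8} and $\E[X_1^8]<\infty$.
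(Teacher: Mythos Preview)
Your approach is essentially the paper's: reduce the tree average to generation averages (your Toeplitz argument is exactly Lemma~A.2 of \cite{BercuBDSAGP} that the paper invokes), then show $\sum_n \E[(M_{\G_n}(f)-\E[f(T)])^2]<\infty$ via the geometric ergodicity of the lineage transition operator $\calP$ (the paper's $Q$), and conclude by Borel--Cantelli. Your martingale telescoping $M_{\G_n}(f)=\calP^n f(X_1)+\sum_j\delta_j(\calP^{n-j}f)$ is the same computation as the paper's use of Guyon's second-moment identity $\E[\overline M_{\G_n}(g)^2]=2^{-n}\nu Q^n g^2+\sum_k 2^{-k-1}\nu Q^k P(Q^{n-k-1}g\star Q^{n-k-1}g)$, just repackaged.

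One small bookkeeping slip to fix: the ergodicity bound you need does \emph{not} live in the cubic-growth norm. Using $|f(y)-f(y')|\le\gamma(1+\max(y,y')^3)|y-y'|$ together with Cauchy--Schwarz, $\E_x[\max(Z_m,T)^6]\le C(1+x^6)$ and $\E_x[(Z_m-T)^2]\le C\overline a^{\,m}(1+x)^2$ give $|\calP^m f(x)-\E[f(T)]|\le C\sqrt{\overline a}^{\,m}(1+x^4)$, exactly as the paper obtains; no choice of H\"older exponents removes the extra power of $x$ coming from the initial-condition term in $Z_m-T$. Consequently your variance bound on $\delta_j(h)$ needs $g(x)=1+x^8$ rather than $1+x^6$, and hence $\sup_k\E[X_k^8]<\infty$ rather than sixth moments. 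This is precisely why the paper assumes $\E[X_1^8]<\infty$ and \H{eps8}, and with that correction your argument goes through unchanged.
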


\begin{Prop} \label{cvcrochet}
 Assume that $(\veps_n)$ satisfies \H{eps1} to \H{eps8}. Then, we have 
\begin{equation} \label{limcrochet}
 \lim_{n\to\infty} \frac{<\!M\!>_n}{|\T_{n-1}|} = L \hspace{20pt} \text{ a.s.}
\end{equation}
\noindent where $L$ is the positive definite matrix given by
\begin{equation*} L = 
\E\left[\frac1{(1+T)^2} \begin{pmatrix} \sigma_a^2T+\sigma_c^2 & \rho \\ \rho & \sigma_b^2T+\sigma_d^2 \end{pmatrix} \otimes \begin{pmatrix} T^2 & T \\ T & 1 \end{pmatrix} \right].
\end{equation*}

\end{Prop}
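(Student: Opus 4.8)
The plan is to recognize that the normalized increasing process is, entry by entry, an empirical average over the tree $\T_{n-1}$ of a fixed function of $X_i$, so that the convergence \eqref{limcrochet} follows from Lemma \ref{LFGN}; the positive definiteness of $L$ is then a separate, purely algebraic argument relying on the non-degeneracy of $T$ from Lemma \ref{CVYn}.

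First I would use that $\sum_{k=0}^{n-1}\sum_{i\in\G_k}=\sum_{i\in\T_{n-1}}$ together with $c_i=1+X_i$, so that \eqref{defcrochetM}--\eqref{defLk} give
$$\frac{<\!M\!>_n}{|\T_{n-1}|}=\frac{1}{|\T_{n-1}|}\sum_{i\in\T_{n-1}}G(X_i),\qquad G(x)=\frac{1}{(1+x)^2}\,A(x)\otimes C(x),$$
where
$$A(x)=\begin{pmatrix}\sigma_a^2 x+\sigma_c^2 & \rho\\ \rho & \sigma_b^2 x+\sigma_d^2\end{pmatrix},\qquad C(x)=\begin{pmatrix} x^2 & x\\ x & 1\end{pmatrix}.$$
Each entry of $G$ is a rational function of the form $P(x)/(1+x)^2$ with $\deg P\le 3$; since the denominator never vanishes on $\R_+$, every entry is of class $\calC^1(\R_+)$ and grows at most linearly, so that a bound $|f(x)|+|f'(x)|\le\gamma(1+x^3)$ holds for a suitable $\gamma$. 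Hence every entry of $G$ lies in $\calC_3^1(\R_+)$, and applying Lemma \ref{LFGN} to each of the finitely many entries separately (convergence of a matrix being equivalent to convergence of its entries) yields \eqref{limcrochet} with $L=\E[G(T)]$, which is exactly the announced matrix.

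It then remains to prove $L\succ0$. I would take an arbitrary $u=(u_1,u_2,u_3,u_4)^t$ and compute $u^tLu=\E[(1+T)^{-2}\,u^t(A(T)\otimes C(T))u]$. Writing $C(x)=vv^t$ with $v=(x,1)^t$ and setting $\alpha=Tu_1+u_2$, $\beta=Tu_3+u_4$, the Kronecker structure collapses the quadratic form to $u^t(A(T)\otimes C(T))u=w^tA(T)w$ with $w=(\alpha,\beta)^t$. The matrix $A(T)$ is positive definite almost surely: its diagonal entries are positive and its determinant equals $(\sigma_a^2 T+\sigma_c^2)(\sigma_b^2 T+\sigma_d^2)-\rho^2\ge \sigma_c^2\sigma_d^2-\rho^2>0$ by \H{eps11} (using $T\ge0$). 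Thus the integrand is nonnegative, so $u^tLu=0$ forces $w=0$ almost surely, i.e. $Tu_1+u_2=0$ and $Tu_3+u_4=0$ almost surely.

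Finally, since $T$ is non-degenerate by Lemma \ref{CVYn}, a linear relation $Tu_1+u_2=0$ holding almost surely is possible only if $u_1=u_2=0$, and likewise $u_3=u_4=0$, whence $u=0$. This gives $L\succ0$ and completes the argument. The convergence part is essentially bookkeeping once the growth of the entries is checked, so the main obstacle is making the degeneracy argument airtight: because $C(T)$ is almost surely of rank one, each summand $A(T)\otimes C(T)$ is only positive semidefinite, and the positive definiteness of $L$ genuinely depends on the fluctuation of $T$ supplied by Lemma \ref{CVYn}.
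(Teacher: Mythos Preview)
Your proof is correct. The convergence part is identical to the paper's: both simply invoke Lemma~\ref{LFGN} entry by entry (you are just more explicit about checking membership in $\calC_3^1(\R_+)$).

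For the positive definiteness of $L$, you take a genuinely different route. The paper pulls the randomness outside the Kronecker product by writing
\[
L=\begin{pmatrix}\sigma_a^2 & 0\\ 0 & \sigma_b^2\end{pmatrix}\otimes\E[T\calB]\;+\;\begin{pmatrix}\sigma_c^2 & \rho\\ \rho & \sigma_d^2\end{pmatrix}\otimes\E[\calB],\qquad \calB=\frac{1}{(1+T)^2}\begin{pmatrix}T^2 & T\\ T & 1\end{pmatrix},
\]
and then argues that $\E[\calB]$ and $\E[T\calB]$ are positive definite via trace/determinant and Cauchy--Schwarz (with equality excluded by the non-degeneracy of $T$), so that the second summand is already positive definite thanks to \H{eps11}. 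You instead keep the expectation outside and exploit directly that $C(T)=vv^t$ has rank one, reducing $u^t(A(T)\otimes C(T))u$ to $w^tA(T)w$ with $w=(Tu_1+u_2,\,Tu_3+u_4)^t$; the strict positivity of $A(T)$ (again from \H{eps11}) forces $w=0$ a.s., and the non-degeneracy of $T$ from Lemma~\ref{CVYn} then kills $u$. Your argument is arguably more transparent about \emph{where} the non-degeneracy of $T$ enters, while the paper's decomposition makes it clearer that either $\sigma_a^2,\sigma_b^2>0$ or $\rho^2<\sigma_c^2\sigma_d^2$ alone would suffice. Both arguments ultimately hinge on the same two ingredients: \H{eps11} and Lemma~\ref{CVYn}.
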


\noindent Our first result deals with the almost sure convergence of our WLS estimator $\wh\theta_n$.

\begin{Theo} \label{CVpstheta}
 Assume that $(\veps_n)$ satisfies \H{eps1} to \H{eps8}. Then, $\wh\theta_n$ converges almost surely 
to $\theta$ with the rate of convergence
\begin{equation}
\|\wh\theta_n-\theta\|^2 =  O\left(\frac{n}{|\T_{n-1}|}\right) \hspace{20pt} \text{ a.s.} \label{rate}
\end{equation}
\noindent In addition, we also have the quadratic strong law
\begin{equation}\label{quadratic1}
 \lim_{n\to\infty} \frac1n \sum_{k=1}^n |\T_{k-1}|(\wh \theta_k -\theta)^t \Lambda (\wh \theta_k -\theta) = tr(\Lambda^{-1/2}L\Lambda^{-1/2}) \hspace{20pt} \text{ a.s.}\\
\end{equation}
\noindent where
\begin{equation}\label{defA}
\Lambda = I_2\otimes A \hspace{20pt} \text{ and } \hspace{20pt} A = \E\left[\frac1{1+T}\begin{pmatrix} T^2 & T \\ T & 1 \end{pmatrix}\right].
\end{equation}

\end{Theo}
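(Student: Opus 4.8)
The plan is to build everything on the martingale identity \eqref{difftheta}, $\wh\theta_n-\theta=\Sigma_{n-1}^{-1}M_n$, feeding it the convergence of $<\!M\!>_n$ from Proposition \ref{cvcrochet} together with the strong law and the quadratic strong law for vector-valued martingales. The preliminary step is to normalise $\Sigma_n$. Since $c_k=1+X_k$ and $\Phi_k=(X_k,1)^t$, every entry of $S_n/|\T_n|$ has the form $|\T_n|^{-1}\sum_{k\in\T_n}f(X_k)$ with $f$ equal to $x^2/(1+x)$, $x/(1+x)$ or $1/(1+x)$, and each of these belongs to $\calC_3^1(\R_+)$. Hence Lemma \ref{LFGN} gives $S_n/|\T_n|\to A$ and therefore $\Sigma_n/|\T_n|=I_2\otimes(S_n/|\T_n|)\to\Lambda$ almost surely. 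Because $T$ is non-degenerate, $v^tAv=\E[(v_1T+v_2)^2/(1+T)]>0$ for every $v\neq0$, so $A$ and hence $\Lambda$ are positive definite; in particular $\Sigma_n$ is eventually invertible with $\|\Sigma_n^{-1}\|=O(|\T_n|^{-1})$.

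For the rate \eqref{rate} and the almost sure convergence I would invoke the strong law of large numbers for locally square integrable vector martingales, adapted from \cite{Duflo} as announced in Section \ref{martappro}, in the form $M_n^t<\!M\!>_n^{-1}M_n=O(\log\det<\!M\!>_n)$ a.s. By Proposition \ref{cvcrochet}, $<\!M\!>_n/|\T_{n-1}|\to L$ with $L$ positive definite, so $\log\det<\!M\!>_n=O(n)$ and $\lambda_{\max}(<\!M\!>_n)=O(|\T_{n-1}|)$, whence $\|M_n\|^2\le\lambda_{\max}(<\!M\!>_n)\,M_n^t<\!M\!>_n^{-1}M_n=O(n|\T_{n-1}|)$. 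Combining this with the bound on $\|\Sigma_{n-1}^{-1}\|$ in \eqref{difftheta} yields $\|\wh\theta_n-\theta\|^2\le\|\Sigma_{n-1}^{-1}\|^2\|M_n\|^2=O(n/|\T_{n-1}|)$, which is \eqref{rate}; and since $n/|\T_{n-1}|\to0$ this already delivers $\wh\theta_n\to\theta$ a.s.

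The quadratic strong law \eqref{quadratic1} is where the real work lies. Using \eqref{difftheta} I would rewrite the summand as $|\T_{k-1}|(\wh\theta_k-\theta)^t\Lambda(\wh\theta_k-\theta)=M_k^t\bigl(|\T_{k-1}|\Sigma_{k-1}^{-1}\Lambda\Sigma_{k-1}^{-1}\bigr)M_k$, and the normalisation of the first step gives $|\T_{k-1}|\Sigma_{k-1}^{-1}\Lambda\Sigma_{k-1}^{-1}=|\T_{k-1}|^{-1}\Lambda^{-1}+R_k$ with $\|R_k\|=o(|\T_{k-1}|^{-1})$. It then suffices to prove the quadratic strong law for the main term, namely $\frac1n\sum_{k=1}^n|\T_{k-1}|^{-1}M_k^t\Lambda^{-1}M_k\to tr(\Lambda^{-1/2}L\Lambda^{-1/2})$, and to discard the remainder. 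The main term is a quadratic strong law for the martingale $(M_n)$, whose limit is identified as $tr(\Lambda^{-1}L)=tr(\Lambda^{-1/2}L\Lambda^{-1/2})$ through the normalised increasing process $<\!M\!>_k/|\T_{k-1}|\to L$. For the remainder I would use that $a_k:=\|M_k\|^2/|\T_{k-1}|$ has bounded Cesàro averages (a consequence of the main term and $\Lambda^{-1}\ge\lambda_{\min}(\Lambda^{-1})I$), so that from $|M_k^tR_kM_k|\le\|R_k\|\,|\T_{k-1}|\,a_k=o(1)a_k$ a Toeplitz argument forces $\frac1n\sum_{k\le n}|M_k^tR_kM_k|\to0$.

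I expect this last step to be the main obstacle. In contrast with the mere rate of the second step, the quadratic strong law needs the sharp martingale QSL with the correct energy normalisation, and its validity rests on controlling the fourth conditional moments of the increments $\Psi_{k-1}\xi_k$. This is precisely where the higher-order moment hypotheses \H{eps4} and \H{eps8}, together with Lemma \ref{LFGN} applied to polynomials of degree up to three, are needed, and where the announced adaptation of the vector martingale theorems of \cite{Duflo,HallHeyde} to the doubling-dimension structure of Section \ref{martappro} does the decisive work.
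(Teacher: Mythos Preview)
Your outline is sound, but it follows a genuinely different route from the paper's proof, and the difference is precisely at the point you flag as the obstacle.

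The paper does \emph{not} invoke any off-the-shelf strong law of the form $M_n^t<\!M\!>_n^{-1}M_n=O(\log\det<\!M\!>_n)$ nor any abstract quadratic strong law for $(M_n)$. Instead it works with $\calV_n=M_n^t\Sigma_{n-1}^{-1}M_n$ and establishes, following \cite{BercuBDSAGP}, the telescoping identity
\[
\calV_{n+1}+\calA_n=\calV_1+\calB_{n+1}+\calW_{n+1},
\]
where $\calA_n=\sum_{k\le n}M_k^t(\Sigma_{k-1}^{-1}-\Sigma_k^{-1})M_k$, $\calB_{n+1}$ is a scalar martingale, and $\calW_{n+1}=\sum_{k\le n}\Delta M_{k+1}^t\Sigma_k^{-1}\Delta M_{k+1}$. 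The analysis then proceeds in three steps: (i) a direct Ces\`aro argument on the conditional covariances $L_k$ gives $\calW_n/n\to\tfrac12\,tr(\Lambda^{-1/2}L\Lambda^{-1/2})$; (ii) the key matrix inequality $L_n\le\alpha\,\Psi_n\Psi_n^t$ together with Lemma~B.1 of \cite{BercuBDSAGP} yields $<\!\calB\!>_n\le 4\alpha\calA_n$, so $\calB_n=o(\calA_n)$ and hence $\calV_n=O(n)$, $\calA_n=O(n)$; (iii) a separate lemma based on Wei's inequality shows $\|M_n\|^2=o(|\T_n|n^\delta)$ for every $\delta>1/2$, which forces $\calV_n=o(n^\delta)$ and therefore isolates $\calA_n/n\to\tfrac12\,tr(\Lambda^{-1/2}L\Lambda^{-1/2})$. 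The QSL \eqref{quadratic1} then drops out after noting $\Sigma_{k-1}^{1/2}\Sigma_k^{-1}\Sigma_{k-1}^{1/2}\to\tfrac12 I_4$.

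In short, the ``main term'' $\frac1n\sum_k|\T_{k-1}|^{-1}M_k^t\Lambda^{-1}M_k\to tr(\Lambda^{-1}L)$ that you treat as a black box is exactly what the paper proves via the $\calA_n/\calB_n/\calW_n$ decomposition plus Wei's lemma; the rate \eqref{rate} is a by-product of step~(ii) rather than of a Lai--Wei type bound on $M_n^t<\!M\!>_n^{-1}M_n$. Your approach would work provided one can cite a QSL for vector martingales that covers the doubling-increment structure here; the paper's approach trades that citation for a self-contained computation and the single external input of Wei's inequality. Your Toeplitz disposal of the remainder is correct and is mirrored in the paper by the step $\Delta_k\to\tfrac12 I_4$.
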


\noindent Our second result concerns the almost sure asymptotic properties of our WLS variance and covariance estimators $\wh \eta_n$, $\wh \zeta_n$ and $\wh \rho_n$. Let
\begin{align*}
\eta_n  &= Q_{n-1}^{-1} \sum_{k\in \T_{n-1}} \frac{1}{d_k} V_{2k}^2 \Phi_k,\\
\zeta_n &= Q_{n-1}^{-1} \sum_{k\in \T_{n-1}} \frac{1}{d_k} V_{2k+1}^2 \Phi_k,\\
\rho_n &= \frac1{|\T_{n-1}|}\sum_{k\in\T_{n-1}} V_{2k}V_{2k+1}.
\end{align*}

\begin{Theo}\label{CVpsvar}
 Assume that $(\veps_n)$ satisfies \H{eps1} to \H{eps8}. Then, $\wh \eta_n$ and $ \wh \zeta_n$ converge almost surely to $\eta$ and 
$\zeta$ respectively. More precisely,
\begin{align}
\|\wh \eta_{n} - \eta_{n}\|  &= \calO\left(\frac{n}{|\T_{n-1}|}\right) \hspace{20pt} \text{ a.s.}\label{vitesseeta} \\
\|\wh \zeta_{n} - \zeta_{n}\| &= \calO\left(\frac{n}{|\T_{n-1}|}\right) \hspace{20pt} \text{ a.s.}\label{vitesseetad} 
\end{align}

\noindent In addition, $\wh \rho_n$ converges almost surely to $\rho$ with
\begin{equation}
\wh \rho_n - \rho_n = \calO\left(\frac n{|\T_{n-1}|}\right) \hspace{20pt} \text{ a.s.} \label{vitesserho}
\end{equation}

\end{Theo}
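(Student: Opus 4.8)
The plan is to treat the three estimators separately but with a common mechanism: in each case the hatted residuals differ from the true ones only through the estimation error $\wh\theta_n-\theta$, whose rate is already controlled by Theorem \ref{CVpstheta}. Writing $\delta_k=(\wh a_n-a)X_k+(\wh c_n-c)$ and $\gamma_k=(\wh b_n-b)X_k+(\wh d_n-d)$, relations \eqref{linsyst} give $\wh V_{2k}=V_{2k}-\delta_k$ and $\wh V_{2k+1}=V_{2k+1}-\gamma_k$, so that $\wh V_{2k}^2-V_{2k}^2=-2V_{2k}\delta_k+\delta_k^2$. Consequently
\begin{equation*}
\wh\eta_n-\eta_n=Q_{n-1}^{-1}\sum_{k\in\T_{n-1}}\frac1{d_k}\left(-2V_{2k}\delta_k+\delta_k^2\right)\Phi_k.
\end{equation*}
Before estimating the two resulting sums I would first record that $Q_{n-1}^{-1}=\calO(|\T_{n-1}|^{-1})$ a.s.: applying Lemma \ref{LFGN} to the bounded functions $x\mapsto x^2/(1+x)^2$, $x\mapsto x/(1+x)^2$ and $x\mapsto 1/(1+x)^2$, all of which lie in $\calC_3^1(\R_+)$, yields that $|\T_{n-1}|^{-1}Q_{n-1}$ converges to a positive definite matrix by non-degeneracy of $T$.

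The quadratic term is routine. Since $d_k=(1+X_k)^2$ we have $\delta_k^2\le 2\|\wh\theta_n-\theta\|^2(X_k^2+1)$, hence $d_k^{-1}\delta_k^2\|\Phi_k\|\le C\|\wh\theta_n-\theta\|^2(1+X_k)$; summing and using Lemma \ref{LFGN} with $f(x)=1+x$ gives a contribution of order $\|\wh\theta_n-\theta\|^2|\T_{n-1}|$, which after multiplication by $Q_{n-1}^{-1}$ is $\calO(\|\wh\theta_n-\theta\|^2)=\calO(n/|\T_{n-1}|)$ by Theorem \ref{CVpstheta}.

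The cross term is the \emph{main obstacle}. Factoring out the index-free errors, $\sum_k d_k^{-1}V_{2k}\delta_k\Phi_k=(\wh a_n-a)\sum_k d_k^{-1}V_{2k}X_k\Phi_k+(\wh c_n-c)\sum_k d_k^{-1}V_{2k}\Phi_k$, and each sum is a vector martingale along the generations because $\E[V_{2k}\mid\calF_{r_k}]=0$ while $X_k,d_k,\Phi_k$ are $\calF_{r_k}$-measurable. Using \eqref{espV2} and $d_k=(1+X_k)^2$, the associated increasing processes have entries bounded by $C\sum_k(1+X_k)$, hence are $\calO(|\T_{n-1}|)$ by Lemma \ref{LFGN}. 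The delicate point is the sharp almost sure rate: applying the same vector martingale strong law used for $M_n$ (the version of \cite{Duflo} adapted to the doubling framework of Section \ref{martappro}), I get $\|\sum_k d_k^{-1}V_{2k}X_k\Phi_k\|^2=\calO(|\T_{n-1}|\log|\T_{n-1}|)=\calO(n|\T_{n-1}|)$, and likewise for the second sum. Combining $\calO(\sqrt{n|\T_{n-1}|})$ for the martingale with $|\wh a_n-a|=\calO(\sqrt{n/|\T_{n-1}|})$ from Theorem \ref{CVpstheta} and $Q_{n-1}^{-1}=\calO(|\T_{n-1}|^{-1})$ gives exactly $\calO(n/|\T_{n-1}|)$, proving \eqref{vitesseeta}; \eqref{vitesseetad} is identical with $V_{2k+1}$ and $\gamma_k$. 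The almost sure convergence $\wh\eta_n\to\eta$ then follows once $\eta_n\to\eta$, which I obtain by writing $\eta_n-\eta=Q_{n-1}^{-1}\sum_k d_k^{-1}v_{2k}\Phi_k$ with $v_{2k}=V_{2k}^2-\E[V_{2k}^2\mid\calF_{r_k}]$ and applying the martingale strong law together with the bound \eqref{Ev2k}.

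Finally, for $\wh\rho_n$ the expansion $\wh V_{2k}\wh V_{2k+1}-V_{2k}V_{2k+1}=-V_{2k}\gamma_k-V_{2k+1}\delta_k+\delta_k\gamma_k$ splits $\wh\rho_n-\rho_n$ into two cross terms and one product term, all divided by $|\T_{n-1}|$. The product term is bounded by $C\|\wh\theta_n-\theta\|^2|\T_{n-1}|^{-1}\sum_k(1+X_k)^2=\calO(\|\wh\theta_n-\theta\|^2)$ via Lemma \ref{LFGN} with $f(x)=(1+x)^2$, hence $\calO(n/|\T_{n-1}|)$; the cross terms are handled exactly as above, the relevant martingales now being $\sum_k V_{2k}X_k$ and $\sum_k V_{2k}$, whose increasing processes are $\calO(|\T_{n-1}|)$ thanks to $\E[T^3]<\infty$ from Lemma \ref{CVYn}. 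This gives again $\calO(n/|\T_{n-1}|)$ and establishes \eqref{vitesserho}, while almost sure convergence $\wh\rho_n\to\rho$ follows since $\rho_n\to\rho$ by the martingale strong law applied to $\sum_k(V_{2k}V_{2k+1}-\rho)$, whose increments are centered by \H{eps11}.
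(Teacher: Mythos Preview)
Your argument is internally consistent, but it rests on a reading of the residuals that differs from the paper's. You take $\wh V_{2k}=V_{2k}-\delta_k$ with $\delta_k=(\wh a_n-a)X_k+(\wh c_n-c)$, i.e.\ you plug in the \emph{terminal} estimator $\wh\theta_n$ for every $k\in\T_{n-1}$. The paper's definition in Section~3 is generation-wise: for $k\in\G_l$ one has $\wh V_{2k}-V_{2k}=-((\wh a_l-a)X_k+(\wh c_l-c))$, and the proof in Section~\ref{demoCVpsvar} uses this explicitly. Under your reading the scalar $\wh\theta_n-\theta$ factors cleanly out of the cross term, leaving a pure martingale whose size you control by the strong law and then couple with the rate \eqref{rate}. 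Under the paper's reading the coefficient $(\wh a_l-a,\wh c_l-c)$ varies with the generation $l$ and cannot be pulled outside the sum; instead the paper treats
\[
P_n=\sum_{l=0}^{n-1}\sum_{k\in\G_l}\frac{(\wh V_{2k}-V_{2k})V_{2k}}{d_k}\Phi_k
\]
as a martingale in $l$ whose increasing process is bounded by $\alpha\sum_{l}\bigl((\wh a_l-a)^2+(\wh c_l-c)^2\bigr)\sum_{k\in\G_l}c_k$, and then invokes the \emph{quadratic strong law} \eqref{quadratic1} to conclude that this is $\calO(n)$, whence $P_n=o(n)$. The quadratic piece and the $\rho$ cross terms are handled by the same mechanism.

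So the comparison is this: for the plug-in estimator with the terminal $\wh\theta_n$, your proof is correct and slightly more elementary, since it uses only the pointwise rate \eqref{rate} and never touches \eqref{quadratic1}. For the paper's generation-wise residuals, your factorization step is unavailable and the key ingredient you are missing is precisely the quadratic strong law, which is what allows the random coefficients $\wh\theta_l-\theta$ to stay inside the martingale increments while still yielding an $\calO(n)$ bracket. Either make explicit that you analyse the alternative (and equally natural) plug-in version, or rework the cross terms through \eqref{quadratic1} as the paper does. A minor point: your bound on the increasing process of $\sum_k V_{2k}X_k$ needs Lemma~\ref{LFGN} applied to $f(x)=x^2(1+x)\in\calC_3^1(\R_+)$, not just the moment statement $\E[T^3]<\infty$ from Lemma~\ref{CVYn}.
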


\begin{Rem}\label{remrate}
We also have the rates of convergence
$$\|\wh \eta_{n} - \eta\|^2 =\calO\left(\frac{n}{|\T_{n-1}|}\right),~~ \|\wh \zeta_{n} - \zeta\|^2 =\calO\left(\frac{n}{|\T_{n-1}|}\right),~~ (\wh \rho_{n} - \rho)^2 =\calO\left(\frac{n}{|\T_{n-1}|}\right) ~~~ a.s.$$
\end{Rem}

\noindent Our last result is devoted to the asymptotic normality of our WLS estimators $\wh\theta_n$, $\wh \eta_n$, $\wh \zeta_n$ and $\wh \rho_n$.

\begin{Theo}\label{TCL}
 Assume that $(\veps_n)$ satisfies \H{eps1} to \H{eps8}. Then, we have the asymptotic normality
\begin{equation} \label{TCLtheta}
 \sqrt{|\T_{n-1}|}(\wh\theta_n - \theta) \liml \calN(0,(I_2\otimes A^{-1})L(I_2\otimes A^{-1})).
\end{equation}

\noindent In addition, we also have
\begin{eqnarray}
\sqrt{|\T_{n-1}|}\left(\wh \eta_{n} - \eta\right)  \liml \calN(0,B^{-1} M_{ac} B^{-1}) \label{TCLeta},\\
\sqrt{|\T_{n-1}|}\left(\wh \zeta_{n} - \zeta\right) \liml \calN(0,B^{-1} M_{bd} B^{-1}) \label{TCLetad},
\end{eqnarray}
\noindent where
$$B = \E\left[\frac1{(1+T)^2} \begin{pmatrix} T^2 & T \\ T & 1 \end{pmatrix} \right],$$
$$M_{ab} = \E\left[\frac{2\sigma_a^4T^2+(\mu_a^4-3\sigma_a^4+4\sigma_a^2\sigma_c^2)T+\mu_c^4-\sigma_c^4}{(1+T)^4} \begin{pmatrix} T^2 & T \\ T & 1 \end{pmatrix} \right],$$
$$M_{bd} = \E\left[\frac{2\sigma_b^4T^2+(\mu_b^4-3\sigma_b^4+4\sigma_b^2\sigma_d^2)T+\mu_d^4-\sigma_d^4}{(1+T)^4} \begin{pmatrix} T^2 & T \\ T & 1 \end{pmatrix} \right].$$
Finally,
\begin{equation} 
\sqrt{|\T_{n-1}|}  \left(\wh \rho_n - \rho\right) \liml \calN\left(0,\sigma_\rho^2 \right) \label{TCLrho}
\end{equation}

\noindent where 
\begin{equation}\label{sigmarho}
\sigma_\rho^2 = \sigma_a^2 \sigma_b^2 \E[T^2] + \left(\sigma_a^2 \sigma_d^2 + \sigma_b^2 \sigma_c^2\right)\frac{\overline c}{1-\overline a} + \nu^2 - \rho^2,
\end{equation}
$$\E[T^2] = \frac{\Upsilon\overline{c}}{1-\overline{a}} + \frac{\overline{c^2} - \Upsilon \overline{c}}{1-\overline{a^2}} + \frac{2\overline{a}(\overline{c}^2)}{(1-\overline{a})(1-\overline{a}^2)},$$

$$\Upsilon= \frac{\sigma_a^2 + \sigma_b^2}{2(\overline a  - \overline{a^2})}, \hspace{20pt} \overline {a} = \frac{a+ b}{2}, \hspace{20pt} \overline {a^2} = \frac{a^2 + b^2}{2},$$ 
$$\overline c = \frac{c+d}{2}, \hspace{20pt}  \overline{c^2} = \frac{\sigma_c^2 + \sigma_d^2 + c^2 + d^2}2.$$

\end{Theo}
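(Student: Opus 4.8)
The four convergences all follow one template: exhibit an underlying (vector- or scalar-valued) martingale whose normalized increasing process has already been identified, apply a central limit theorem for martingales, and finish with Slutsky's lemma after dividing by the normalized design matrix. I begin with \eqref{TCLtheta}. By \eqref{difftheta},
$\sqrt{|\T_{n-1}|}(\wh\theta_n-\theta)=\bigl(\Sigma_{n-1}/|\T_{n-1}|\bigr)^{-1}\,M_n/\sqrt{|\T_{n-1}|}$, where $(M_n)$ is the locally square integrable martingale of Section~\ref{martappro}. Applying Lemma~\ref{LFGN} to the three functions $x\mapsto x^2/(1+x)$, $x\mapsto x/(1+x)$ and $x\mapsto 1/(1+x)$ gives $S_{n-1}/|\T_{n-1}|\to A$, hence $\Sigma_{n-1}/|\T_{n-1}|=I_2\otimes(S_{n-1}/|\T_{n-1}|)\to\Lambda$ almost surely, while Proposition~\ref{cvcrochet} yields $<\!M\!>_n/|\T_{n-1}|\to L$ almost surely. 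Granting the Lindeberg condition (postponed to the last paragraph), the vector martingale CLT adapted from \cite{Duflo} as announced in Section~\ref{martappro} gives $M_n/\sqrt{|\T_{n-1}|}\liml\calN(0,L)$, and Slutsky's lemma together with $\Lambda^{-1}=I_2\otimes A^{-1}$ produces exactly the covariance $(I_2\otimes A^{-1})L(I_2\otimes A^{-1})$ of \eqref{TCLtheta}.

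For the variance estimators I first reduce $\wh\eta_n$ and $\wh\zeta_n$ to the auxiliary quantities built on the true residuals: the almost sure rates \eqref{vitesseeta}--\eqref{vitesseetad} of Theorem~\ref{CVpsvar} give $\sqrt{|\T_{n-1}|}\,\|\wh\eta_n-\eta_n\|=\calO(n/\sqrt{|\T_{n-1}|})\to0$ almost surely, so it suffices to prove the CLT for $\eta_n$. Writing $V_{2k}^2=\eta^t\Phi_k+v_{2k}$ gives $\eta_n-\eta=Q_{n-1}^{-1}N_n$ with $N_n=\sum_{k\in\T_{n-1}}d_k^{-1}v_{2k}\Phi_k$, a square integrable martingale since $\E[v_{2k}|\calF_{r_k}]=0$. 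Its increasing process is $<\!N\!>_n=\sum_{k\in\T_{n-1}}d_k^{-2}\E[v_{2k}^2|\calF_{r_k}]\Phi_k\Phi_k^t$; inserting \eqref{Ev2k} and applying Lemma~\ref{LFGN} entrywise (to functions $x\mapsto g(x)x^j/(1+x)^4$, $j=0,1,2$, with $g$ the quadratic of \eqref{Ev2k}) shows $<\!N\!>_n/|\T_{n-1}|\to M_{ac}$, whereas $Q_{n-1}/|\T_{n-1}|\to B$. The martingale CLT for $N_n$ followed by Slutsky yields \eqref{TCLeta}, and \eqref{TCLetad} is obtained verbatim with $(Z_{n,i})$ replacing $(Y_{n,i})$.

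The covariance estimator is handled the same way. By \eqref{vitesserho} it is enough to treat $\rho_n$, and under \H{eps11} one has $\E[V_{2k}V_{2k+1}|\calF_{r_k}]=\rho$, so $P_n=\sum_{k\in\T_{n-1}}(V_{2k}V_{2k+1}-\rho)$ is a scalar martingale with $\sqrt{|\T_{n-1}|}(\rho_n-\rho)=P_n/\sqrt{|\T_{n-1}|}$. Using the decomposition $V_{2k}=\sum_{i=1}^{X_k}(Y_{k,i}-a)+(\veps_{2k}-c)$ and $V_{2k+1}=\sum_{i=1}^{X_k}(Z_{k,i}-b)+(\veps_{2k+1}-d)$ and the independence of the two offspring sequences, a direct expansion gives $\E[(V_{2k}V_{2k+1}-\rho)^2|\calF_{r_k}]=\sigma_a^2\sigma_b^2X_k^2+(\sigma_a^2\sigma_d^2+\sigma_b^2\sigma_c^2)X_k+\nu^2-\rho^2$. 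Lemma~\ref{LFGN} then turns $<\!P\!>_n/|\T_{n-1}|$ into $\sigma_\rho^2$, where $\E[T]=\overline c/(1-\overline a)$ and $\E[T^2]$ are read off from the branch process $(Y_n)$ of Lemma~\ref{CVYn}, and a final application of the martingale CLT gives \eqref{TCLrho}.

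The one genuinely delicate point, common to all four cases, is the Lindeberg condition, and it is precisely why the classical CLT must be adapted. The generation increments $\Psi_{k-1}\xi_k$ are \emph{not} asymptotically negligible: each aggregates a whole generation $\G_{k-1}$ of size $2^{k-1}$, which is of the same order as $|\T_{k-1}|$, so $\|\Psi_{k-1}\xi_k\|/\sqrt{|\T_{k-1}|}$ stays of order one. Consequently the Lindeberg condition cannot be checked at the level of generation increments but must be imposed on the \emph{individual} contributions $c_k^{-1}(X_kV_{2k},V_{2k},X_kV_{2k+1},V_{2k+1})^t$ (respectively $d_k^{-1}v_{2k}\Phi_k$ and $V_{2k}V_{2k+1}-\rho$), each of which stays bounded in $n$ and is therefore negligible against $\sqrt{|\T_{n-1}|}$. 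Concretely I will verify the Lyapunov condition with exponent four, namely $|\T_{n-1}|^{-2}\sum_{k\in\T_{n-1}}\E\bigl[\,\|\cdot\|^4\,\big|\calF_{r_k}\bigr]\to0$ almost surely: Lemma~\ref{LFGN} bounds the averaged sum by a constant and the remaining factor $|\T_{n-1}|^{-1}$ forces the limit to vanish. Here the strength of the moment hypotheses is essential, and \H{eps8} is indispensable for the variance and covariance statements, since controlling the fourth conditional moments of $v_{2k}$ and of $V_{2k}V_{2k+1}$ requires eighth-order moments of both the immigration and the offspring laws, whereas the fourth-order assumption \H{eps4} already suffices for \eqref{TCLtheta}.
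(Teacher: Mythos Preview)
Your strategy is the paper's: pass to the sister pair-wise filtration $\calG_k=\sigma(X_1,(X_{2i},X_{2i+1}),\,i\le k)$, run the triangular-array martingale CLT on the \emph{individual} contributions, identify the bracket via Lemma~\ref{LFGN} and Proposition~\ref{cvcrochet}, finish with Slutsky, and transfer to $\wh\eta_n,\wh\zeta_n,\wh\rho_n$ through the rates \eqref{vitesseeta}--\eqref{vitesserho}. Your diagnosis of why Lindeberg fails at the generation level and must be checked per individual is exactly the point of the paper's filtration switch.

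There is one place where the proposal, as written, does not go through under the stated hypotheses. For \eqref{TCLeta}, \eqref{TCLetad} and \eqref{TCLrho} you propose Lyapunov with exponent four and the route ``$\E[\,\cdot\,\mid\calF_{r_k}]$ is a function of $X_k$, then apply Lemma~\ref{LFGN}''. Bounding $\E[v_{2k}^4\mid\calG_{k-1}]$ (resp.\ $\E[(V_{2k}V_{2k+1})^4\mid\calG_{k-1}]$) forces $\E[V_{2k}^8\mid\calG_{k-1}]$, hence \emph{conditional} eighth moments of the immigration; but \H{eps8} supplies only conditional sixth moments (the eighth-moment clause there is unconditional and is consumed by Lemma~\ref{LFGN}, not by the Lyapunov step). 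The paper uses exponent three for these parts precisely so that only $\E[V_{2k}^6\mid\calG_{k-1}]\le\beta c_k^3$ is needed, which \H{eps8} does provide. Either lower your exponent to three as the paper does, or verify Lyapunov in probability via unconditional moments (taking expectations and using $\sup_n\E[\veps_n^8]<\infty$); but the ``almost surely via Lemma~\ref{LFGN}'' version with exponent four is not available here.
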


\noindent The rest of the paper is dedicated to the proof of our main results.


\section{Proof of Lemma \ref{CVYn}} \label{preuve lemme CVYn}


We can reformulate \eqref{defYn1} and \eqref{defYn2} as
$$Y_n = a_n \circ a_{n-1} \circ \hdots \circ a_2 \circ Y_1 + \sum_{k=2}^{n-1} a_n \circ a_{n-1} \circ \hdots \circ a_{k+1} \circ e_k 
      + e_n.$$

\noindent We already made the assumption that $\Pro_{\veps_{2n}}$ and $\Pro_{\veps_{2n+1}}$ do not depend on $n$. Consequently, the couples $(a_k,e_k)$ and $(a_{n-k+2},e_{n-k+2})$ share the same distribution. Hence, for all $n\geq2$, $Y_n$ has the same distribution than the random variable
\begin{align*}
 Z_n &= a_2 \circ \hdots \circ a_n \circ Y_1 + \sum_{k=2}^{n-1} a_2 \circ a_3 \circ \hdots \circ a_{n-k+1} \circ e_{n-k+2} 
    + e_2,\\
	      &= a_2 \circ \hdots \circ a_n \circ Y_1 + \sum_{k=3}^{n} a_2 \circ a_3 \circ \hdots \circ a_{k-1} \circ e_{k} 
    + e_2.
\end{align*}

\noindent For the sake of simplicity, we will denote 
\begin{equation}Z_n = a_2 \circ \hdots \circ a_n \circ Y_1 + \sum_{k=2}^{n} a_2 \circ a_3 \circ \hdots \circ a_{k-1} \circ e_{k}.\label{defZn}\end{equation}

\noindent For all $n\geq 2$ and for all $2\leq k \leq n$, let 
$$\Sigma_n^{n-k+2}  = a_k \circ \hdots \circ a_n \circ Y_1$$

\noindent with $\Sigma_n^n = a_2 \circ \hdots \circ a_n \circ Y_1$  and $\Sigma_n^1=Y_1$. We clearly have $\Sigma_n^{n-k+2}= a_k \circ \Sigma_n^{n-k+1}$. Consequently, it follows from the tower property of the conditional expectation that
\begin{align*}
\E[\Sigma_n^n] &= \E[a_2 \circ \Sigma_n^{n-1}] = \left(\E[a \circ \Sigma_n^{n-1}]\Pro(a_2=a) +\E[b \circ \Sigma_n^{n-1}]\Pro(a_2=b)\right),\\
&= \frac12\left(\E\left[\E\left[\left.\sum_{i=1}^{\Sigma_n^{n-1}} Y_{2,i} \right| \Sigma_n^{n-1}\right]\right] + \E\left[\E\left[\left.\sum_{i=1}^{\Sigma_n^{n-1}} Z_{2,i} \right| \Sigma_n^{n-1}\right]\right]\right),\\
&= \frac12\left(\E\left[\sum_{i=1}^{\Sigma_n^{n-1}}\E\left[ Y_{2,i}\right]\right] + \E\left[\sum_{i=1}^{\Sigma_n^{n-1}}\E\left[ Z_{2,i} \right]\right]\right),\\
&= \frac12\left(\E[a\Sigma_n^{n-1}]+\E[b\Sigma_n^{n-1}]\right) = \overline a \E[\Sigma_n^{n-1}] = \hdots\\
&=\overline a ^{n-1} \E[\Sigma_n^{1}] = \overline a ^{n-1} \E[Y_1].
\end{align*}

\noindent The stability hypothesis $0<\max(a,b)<1$ implies that $0<\overline a<1$ which leads to
$$\sum_{n=2}^\infty \E[\Sigma_n^n]=\E[Y_1] \sum_{n=2}^\infty \overline a ^{n-1} = \frac{\E[Y_1]\overline a}{1-\overline a}.$$

\noindent Then, we obtain from the monotone convergence theorem that 
\begin{equation}\label{limSigman}\lim_{n\to\infty} \Sigma_n^n =0 \hspace{20pt} \text{ a.s.}\end{equation}

\noindent It now remains to study the right-hand side sum in \eqref{defZn}. For all $n\geq2$, denote
$$T_n = \sum_{k=2}^n a_2\circ \hdots \circ a_{k-1} \circ e_k.$$

\noindent By the same calculation as before, we have for all $n\geq2$
$$\E[T_n] = \sum_{k=2}^n \overline a ^{k-2} \E[e_k] = \overline c \sum_{k=0}^{n-2} \overline a ^{k},$$
which implies that
$$\lim_{n\to\infty} \E[T_n] = \frac{\overline c}{1-\overline a}.$$

\noindent Hence, we deduce once again from the monotone convergence theorem that the positive increasing sequence $(T_n)_{n\geq2}$ converges almost surely to
$$T = \sum_{k=2}^\infty a_2\circ \hdots \circ a_{k-1} \circ e_k$$

\noindent which is almost surely finite as $\E[T]<\infty$. Therefore, we can conclude from \eqref{defZn} and \eqref{limSigman} that
$$\lim_{n\to\infty} Z_n =T \hspace{20pt} \text{ a.s.}$$

\noindent leading to 
$$Y_n \liml T.$$

\noindent It only remains to prove that $T$ is not degenerate. Let us write $T_n$ as
$$T_n = e_2 + \sum_{k=3}^n a_2\circ \hdots \circ a_{k-1} \circ e_k.$$
\noindent Since $e_2$ is independent of $(a_2\circ \hdots \circ a_{k-1} \circ e_k)_{k\geq3}$, we have
\begin{equation}\label{VarTn}\Var(T_n) = \Var(e_2) + \Var\left(\sum_{k=3}^n a_2\circ \hdots \circ a_{k-1} \circ e_k\right) \geq \Var(e_2).\end{equation}

\noindent Moreover, it is easy to see that
\begin{equation}\label{Varpos}\Var(e_2) = \overline{c^2} - \overline{c}^2 = \frac{\sigma_c^2+\sigma_d^2}2 + \frac{(c-d)^2}4 \geq \frac{\sigma_c^2+\sigma_d^2}2 >0.\end{equation}

\noindent Consequently, as $(T_n)$ is a positive non-decreasing sequence, we obtain from \eqref{VarTn} and \eqref{Varpos} that 
$$\Var(T)=\lim_{n\to\infty} \Var(T_n) \geq \Var(e_2)>0.$$

\noindent Finally, let us prove that $\E[T^3] < \infty$. First of all, we already saw that
$$\E[a_2 \circ \hdots \circ a_n \circ e_{n+1}] = \overline a ^{n-1} \E[e_2] = \overline a ^{n-1} \overline c.$$
In addition,
\begin{align*}
\E[(\Sigma_n^n)^2] &= \frac12 \left(\E\left[(a \circ \Sigma_n^{n-1})^2\right] + \E\left[(b \circ \Sigma_n^{n-1})^2\right]\right),\\
&= \frac12\left(\E\left[\E\left[\left.\left(\sum_{i=1}^{\Sigma_n^{n-1}} Y_{2,i}\right)^2 \right| \Sigma_n^{n-1}\right]\right] + \E\left[\E\left[\left.\left(\sum_{i=1}^{\Sigma_n^{n-1}} Z_{2,i}\right)^2 \right| \Sigma_n^{n-1}\right]\right]\right),
\end{align*}
and the first expectation is
\begin{align*}
\E\left[\E\left[\left.\left(\sum_{i=1}^{\Sigma_n^{n-1}} Y_{2,i}\right)^2 \right| \Sigma_n^{n-1}\right]\right] &= \E\left[\E\left[\left.\sum_{i=1}^{\Sigma_n^{n-1}} Y_{2,i}^2 + \sum_{i=1}^{\Sigma_n^{n-1}} \sum_{\substack{j=1 \\ j\neq i}}^{\Sigma_n^{n-1}} Y_{2,i} Y_{2,j} \right| \Sigma_n^{n-1}\right]\right],\\
	&= \E\left[\sum_{i=1}^{\Sigma_n^{n-1}}\E[ Y_{2,i}^2] + \sum_{i=1}^{\Sigma_n^{n-1}} \sum_{\substack{j=1 \\ j\neq i}}^{\Sigma_n^{n-1}} \E[Y_{2,i}]\E[ Y_{2,j}]\right],\\
	&= \E[\Sigma_n^{n-1}(\sigma_a^2 +a^2) + \Sigma_n^{n-1}(\Sigma_n^{n-1}-1)a^2],\\
	&=  \E[\Sigma_n^{n-1}]\sigma_a^2 + a^2 \E[(\Sigma_n^{n-1})^2].
\end{align*}
Since the computation of the second expectation is exactly the same, we obtain
\begin{align*}
\E[(\Sigma_n^n)^2] &= \E[\Sigma_n^{n-1}] \frac{\sigma_a^2 + \sigma_b^2}2 + \overline{a^2} \E[(\Sigma_n^{n-1})^2],\\
	&= \overline a ^{n-2} \frac{\sigma_a^2 + \sigma_b^2}2 \E[Y_1] + \overline{a^2} \E[(\Sigma_n^{n-1})^2] = \hdots\\
	&= \left(\sum_{i=0}^{n-2}\overline a ^{n-i-2} \overline{a^2}^i\right) \frac{\sigma_a^2 + \sigma_b^2}2 \E[Y_1] + \overline{a^2}^{n-1} \E[(\Sigma_n^{1})^2],\\
	&=  \frac{\overline a ^{n-1} - \overline{a^2}^{n-1}}{\overline a  - \overline{a^2}}\frac{\sigma_a^2 + \sigma_b^2}2 \E[Y_1] + \overline{a^2}^{n-1} \E[Y_1^2],\\
	&=(\overline a ^{n-1} - \overline{a^2}^{n-1}) \Upsilon \E[Y_1] + \overline{a^2}^{n-1}\E[Y_1^2]
\end{align*}
where 
$$\Upsilon = \displaystyle\frac{\sigma_a^2 + \sigma_b^2}{2(\overline a  - \overline{a^2})}.$$
In the same way, we can prove that 
$$\E_x[(a_2 \circ \hdots \circ a_n \circ e_{n+1})^2] = (\overline a ^{n-1} - \overline{a^2}^{n-1}) \Upsilon \overline c + \overline{a^2}^{n-1}\overline{c^2}.$$
Consequently, as $(e_n)$ is an integer-valued random variable,
$$\E_x[(a_2 \circ \hdots \circ a_n \circ e_{n+1})^2] \leq \overline a ^{n-1} (\Upsilon \overline c + \overline{c^2})\leq \overline a ^{n-1} (\Upsilon +1) \overline{c^2}.$$
Furthermore, we obtain from tedious but straightforward calculations that it exists some constant $\xi>0$ such that for all $2\leq p\leq8$
\begin{equation} \label{majEl}
\E_x[(a_2 \circ \hdots \circ a_n \circ e_{n+1})^p]\leq \xi \E[e_2^p] \overline a ^{n-1}.
\end{equation}
One can observe that the constant $\xi$ only depends on the moments of $(Y_{n,i})$ and $(Z_{n,i})$ up to order 8. Hence, as $0<\overline a <1$, we deduce from \eqref{majEl} and the triangle inequality that
\begin{align*}
\E[T^3]^{1/3} &\leq \sum_{k=2}^\infty \E\left[(a_2 \circ \hdots \circ a_{k-1} \circ e_k)^3\right]^{1/3},\\
&\leq \xi^{1/3} \E[e_2^3]^{1/3} \sum_{k=2}^\infty \overline{a}^{(k-2)/3}  <\infty
\end{align*}

\noindent which immediately leads to $\E[T^3]<\infty$.


\section{Proof of Lemma \ref{LFGN}} \label{demoLFGN}


We shall now prove that for all $f\in\calC_{3}^1(\R_+)$,
\begin{equation} \label{limLFGN}
\lim_{n\to\infty} \frac1{|\T_n|} \sum_{k\in\T_n} f(X_k) = \E[f(T)].
\end{equation}
Denote $g=f-\E[f(T)]$,
$$\begin{array}{ccccc}
\displaystyle{\overline M_{\T_n} (f) = \frac1{|\T_n|} \sum_{k\in \T_n} f(X_k)} & & \text{and} & & \displaystyle{\overline M _{\G_n} (f) = \frac1{|\G_n|} \sum_{k\in \G_n} f(X_k)}.
\end{array}$$
Via Lemma A.2 of \cite{BercuBDSAGP}, it is only necessary to prove that
$$\lim_{n\to\infty} \frac1{|\G_n|} \sum_{k\in\G_n} g(X_k) = 0 \hspace{20pt} \text{a.s.}$$
We shall follow the induced Markov chain approach, originally proposed by Guyon in \cite{Guyon}. Let $Q$ be the transition probability of $(Y_n)$, $Q^p$ the $p$-th iterated of $Q$. In addition, denote by $\nu$ the distribution of $Y_1=X_1$ and $\nu Q^p$ the law of $Y_p$. Finally, let $P$ be the transition probability of $(X_n)$ as defined in \cite{Guyon}. We obtain from relation (7) of \cite{Guyon} that for all $n\geq0$
$$\E[\overline M _{\G_n} (g)^2] = \frac1{2^n} \nu Q^n g^2 + \sum_{k=0}^{n-1} \frac1{2^{k+1}} \nu Q^k P(Q^{n-k-1}g \star Q^{n-k-1}g)$$
where, for all $x,y\in\N$, $(f\star g)(x,y) = f(x)g(y)$. Consequently,
\begin{align*}
\sum_{n=0}^\infty \E[\overline M _{\G_n} (g)^2] &= \sum_{n=0}^\infty  \frac1{2^n} \nu Q^n g^2 + \sum_{n=1}^\infty \sum_{k=0}^{n-1} \frac1{2^{k+1}} \nu Q^k P(Q^{n-k-1}g \star Q^{n-k-1}g),\\
	&\leq \sum_{k=0}^\infty \frac1{2^k} \nu Q^k\left(g^2+P\left(\sum_{l=0}^\infty |Q^l g \star Q^l g|\right)\right).
\end{align*}
However, for all $x\in \N$,
$$Q^n g(x) = Q^n f(x) - \E[f(T)] = \E[f(Y_n) - f(T)] = \E_x[f(Z_n)-f(T)]$$
where $Z_n$ is given by \eqref{defZn}. Hence, we deduce from the mean value theorem and Cauchy-Schwarz inequality that
\begin{equation}
|Q^n g(x)| \leq \E_x[W_n|Z_n-T|] \leq \E_x[W_n^2]^{1/2}\E_x[(Z_n-T)^2]^{1/2} \label{majQn}
\end{equation}
where
$$W_n = \sup_{z\in[Z_n,T]}|f'(z)|.$$
By the very definition of $\calC_{3}^1(\R_+)$, one can find some constant $\alpha>0$ such that $|f'(z)|\leq \alpha (1+z^6)$. Hence, it exists some constant $\beta>0$ such that
\begin{align}
\E_x[W_n^2] &\leq \alpha \E_x[1+Z_n^6+T^6] = \alpha (1+ \E_x[Z_n^6] + \E[T^6]) \nonumber,\\
	& \leq \beta(1+x^6). \label{majEWn}
\end{align}

%
%
%

\noindent As a matter of fact, under hypotheses \H{eps1} to \H{eps8}, $\E[T^6] <\infty$ and it exists some constant $\gamma >0$ such that $\E_x[Z_n^6]<\gamma (1+x^6)$. Let us first deal with $\E[T^6]$. The triangle inequality, together with $0<\overline a <1$ and \eqref{majEl} allow us to say that
\begin{equation*}
\E[T^6]^{1/6} \leq \sum_{k=2}^\infty \E\left[(a_2\circ \hdots \circ a_{k-1}\circ e_k)^6\right]^{1/6} \leq \xi^{1/6} \E[e_2^6]^{1/6} \sum_{k=2}^\infty \overline{a}^{(k-2)/6} < \infty
\end{equation*}
which immediately leads to $\E[T^6]<\infty$. One the other hand, we infer from \eqref{defZn} that
\begin{align*} 
\E_x[Z_n^6]^{1/6} &\leq \E_x[(a_2\circ\hdots\circ a_n\circ Y_1)^6]^{1/6} + \sum_{k=2}^{n} \E_x\left[(a_2 \circ a_3 \circ \hdots \circ a_{k-1} \circ e_{k})^6\right]^{1/6}, \\
& \leq \xi^{1/6} \E_x[Y_1^6]^{1/6} \overline a ^{n-1} + \sum_{k=2}^{\infty} \E\left[(a_2 \circ a_3 \circ \hdots \circ a_{k-1} \circ e_{k})^6\right]^{1/6},\\
&\leq \xi^{1/6}  x + \sum_{k=2}^{\infty} \E\left[(a_2 \circ a_3 \circ \hdots \circ a_{k-1} \circ e_{k})^6\right]^{1/6}
\end{align*}
and we have already proved that the sum in the right-hand term is finite. So we can conclude that there exists some constant $\gamma>0$ such that $\E_x[Z_n^6]<\gamma (1+x^6)$. Furthermore 
$$\displaystyle Z_n- T= a_2 \circ \hdots a_n \circ Y_1 - \sum_{k=n}^\infty a_2 \circ \hdots \circ a_k \circ e_{k+1}$$
and the triangle inequality allows us to say that 
$$\E_x[(Z_n-T)^2]^{1/2} \leq \E_x[(a_2 \circ \hdots a_n \circ Y_1)^2]^{1/2} + \sum_{k=n}^\infty \E_x[(a_2 \circ \hdots \circ a_k \circ e_{k+1})^2]^{1/2}.$$

\noindent We already saw in section \ref{preuve lemme CVYn} that
\begin{align*}
\E_x[(a_2 \circ \hdots a_n \circ Y_1)^2] &= (\overline a ^{n-1} - \overline{a^2}^{n-1}) \Upsilon \E_x[Y_1] + \overline{a^2}^{n-1}\E_x[Y_1^2],\\
	&= (\overline a ^{n-1} - \overline{a^2}^{n-1}) \Upsilon x + \overline{a^2}^{n-1}x^2 = x(\Upsilon \overline a ^{n-1} + \overline{a^2}^{n-1}(x-\Upsilon))
\end{align*}
and
$$\E_x[(a_2 \circ \hdots \circ a_k \circ e_{k+1})^2] = (\overline a ^{k-1} - \overline{a^2}^{k-1}) \Upsilon \overline c + \overline{a^2}^{k-1}\overline{c^2}.$$

\noindent Hence
\begin{align*}
 \sum_{k=n}^\infty \E_x[(a_2 \circ \hdots \circ a_k \circ e_{k+1})^2]^{1/2}	&= \sum_{k=n}^\infty \left(\overline a ^{k-1} \Upsilon \overline c + \overline{a^2}^{k-1} \left(\overline{c^2} - \Upsilon \overline c\right)\right)^{1/2},\\
	&\leq \sum_{k=n}^\infty \left(\overline a ^{k-1} \overline c + \overline{a}^{k-1} \left|\overline{c^2} - \Upsilon \overline c\right|\right)^{1/2},\\
	&\leq \sum_{k=n}^\infty \sqrt{\overline a}^{k-1} \delta= \delta \frac{\sqrt{\overline a}^{n-1}}{1-\sqrt{\overline a}}.
\end{align*} 
where 
$$\delta =\sqrt{\max(\overline{c^2},(1+\Upsilon)\overline c - \overline{c^2})}.$$
To sum up, we find that
\begin{align}
 \E_x[(Z_n-T)^2]^{1/2} &\leq \sqrt{x} \left(\Upsilon \overline a ^{n-1}  + \overline{a^2}^{n-1} (x-\Upsilon)\right)^{1/2} +  \frac{\delta}{1-\sqrt{\overline a}}\sqrt{\overline a}^{n-1}, \nonumber\\
	& \leq \begin{cases}
	\sqrt{x} \left(\Upsilon \overline a ^{n-1}  + \overline{a}^{n-1} (x-\Upsilon)\right)^{1/2} +  \displaystyle \frac{\delta}{1-\sqrt{\overline a}}\sqrt{\overline a}^{n-1} & \text{ if } x > \Upsilon,\\
	\displaystyle \sqrt{x} \sqrt{\Upsilon} \sqrt{\overline a}^{n-1}  + \frac{\delta}{1-\sqrt{\overline a}}\sqrt{\overline a}^{n-1} & \text{ if } x \leq \Upsilon,
	\end{cases} \nonumber\\
	&\leq \begin{cases}
	x \sqrt{\overline a} ^{n-1} +  \displaystyle \frac{\delta}{1-\sqrt{\overline a}}\sqrt{\overline a}^{n-1}  & \text{ if } x > \Upsilon,\\
	\displaystyle \frac{1+x}2 \sqrt{\Upsilon}\sqrt{\overline a}^{n-1} + \frac{\delta}{1-\sqrt{\overline a}}\sqrt{\overline a}^{n-1}& \text{ if } x\leq\Upsilon,
	\end{cases} \nonumber\\
	  &\leq \sqrt{\overline a} ^{n-1} (1+x) \left(\frac{\sqrt{\Upsilon}}2 + \frac{\delta}{1-\sqrt{\overline a}} \right). \label{majEZnT}
\end{align}

\noindent Finally, we obtain from \eqref{majQn} together with \eqref{majEWn} and \eqref{majEZnT} that for some constant $\kappa>0$
$$ |Q^n g(x)| \leq \sqrt{\beta} (1 + x^6)^{1/2}\sqrt{\overline a} ^{n-1} (1+x)\left(\frac{\sqrt{\Upsilon}}2 + \frac{\delta}{1-\sqrt{\overline a}} \right)\leq \sqrt{\overline a} ^n \kappa (1 + x^4).$$
Therefore,
$$\Pro\left(\sum_{n=0}^\infty|Q^n g\star Q^n g|\right) \leq \frac{\kappa^2}{1-\overline a} P(h\star h)$$
where, for all $x\in\N$, $h(x)=1+x^4$. We are now in position to prove that
\begin{equation}\label{but7}
\E\left[\sum_{n=0}^\infty \overline M _{\G_n} (g)^2\right] < \infty.
\end{equation}
It is not hard to see that from hypothesis $\H{eps8}$, it exists some constant $\lambda>0$ such that for all $x\in\N$, $P(h\star h) (x) \leq \lambda(1+x^8)$. 
%
Consequently, it exists some constant $\mu>0$ such that
\begin{align}
 \sum_{n=0}^\infty \E\left[\overline M _{\G_n} (g)^2\right] &\leq \sum_{k=0}^\infty \frac1{2^k} \nu Q^k\left(g^2+P\left(\sum_{l=0}^\infty |Q^l g \star Q^l g|\right)\right),\nonumber\\
	  &\leq \sum_{k=0}^\infty \frac1{2^k}\left(\E[g^2(Y_k)] + \frac{\lambda\kappa^2}{1-\overline a}(1+\E[Y_k^8]) \right),\nonumber\\
	&\leq \left(2\mu + \frac{\lambda\kappa^2}{1-\overline a}\right)\left(2+\sum_{k=0}^\infty \E[Y_k^8] \right). \label{majEMG}
\end{align}
Furthermore, we can deduce from \eqref{majEl} that it exists some constant $\zeta$ such that
\begin{align}
\E[Y_n^8]^{1/8} &\leq \E\left[(a_2 \circ \hdots a_n \circ Y_1)^8\right]^{1/8} + \sum_{k=2}^n \E\left[(a_2 \circ \hdots a_{k-1} \circ e_k)^8\right]^{1/8},\nonumber\\
	&\leq \E\left[(a_2 \circ \hdots a_n \circ Y_1)^8\right]^{1/8} + \xi^{1/8} \E[e_2^8]^{1/8} \sum_{k=2}^n \overline a ^{k-2},\nonumber\\
	&\leq \zeta^{1/8} \E[Y_1^8]^{1/8}\overline a ^{n-1} + \frac{\xi^{1/8}\E[e_2^8]^{1/8}}{1-\overline a},\nonumber\\
	&\leq \frac{\zeta^{1/8} \E[Y_1^8]^{1/8} + \xi^{1/8} \E[e_2^8]^{1/8}}{1-\overline a}. \label{majEY8}
\end{align}
Then, \eqref{majEMG} and \eqref{majEY8} immediately lead to \eqref{but7}. Finally, the monotone convergence theorem implies that
$$\lim_{n\to\infty} \overline M _{\G_n} (g) = 0 \hspace{20pt} a.s.$$
which completes the proof of Lemma \ref{LFGN}.

%
%


\section{Proof of Proposition \ref{cvcrochet}}


The almost sure convergence \eqref{limcrochet} immediately follows from \eqref{defcrochetM} and \eqref{defLk} together with Lemma \ref{LFGN}. It only remains to prove that $\det(L)>0$ where the limiting matrix $L$ can be rewritten as
$$L = \E\left[\Gamma \otimes \calB\right]$$
where
$$\begin{array}{ccccc}
 \Gamma = \begin{pmatrix}
           \sigma_a^2 T + \sigma_c^2 & \rho \\ \rho & \sigma_b^2 T + \sigma_d^2
          \end{pmatrix}
 & & \text{ and } & &
 \calB = \begin{pmatrix}
	\displaystyle\frac{T^2}{(1+T)^2} & \displaystyle\frac{T}{(1+T)^2} \vspace{5pt}\\ \displaystyle\frac{T}{(1+T)^2} & \displaystyle\frac{1}{(1+T)^2}
     \end{pmatrix}.
\end{array}$$

\noindent We have
\begin{align}
 L &= \E\left[\begin{pmatrix} \sigma_a^2 T & 0 \\ 0 & \sigma_b^2 T \end{pmatrix} \otimes \calB\right]
      + \E\left[\begin{pmatrix} \sigma_c^2 & \rho \\ \rho & \sigma_d^2 \end{pmatrix} \otimes \calB\right], \nonumber\\
    &= \begin{pmatrix} \sigma_a^2 & 0 \\ 0 & \sigma_b^2 \end{pmatrix} \otimes \E[T\calB] 
      + \begin{pmatrix} \sigma_c^2 & \rho \\ \rho & \sigma_d^2 \end{pmatrix} \otimes \E[\calB]. \label{decompoL}
\end{align}

\noindent We shall prove that $\E[\calB]$ and $\E[T\calB]$ are both positive definite matrices. Denote by $\lambda_1$ and $\lambda_2$ the two eigenvalues of the real symmetric matrix $\E[\calB]$. We clearly have
$$\lambda_1 + \lambda_2 = tr(\E[\calB]) = \E\left[\frac{T^2+1}{(1+T)^2}\right] > 0$$
and
$$\lambda_1 \lambda_2 = \det(\E[\calB]) = \E\left[\frac{T^2}{(1+T)^2}\right] \E\left[\frac{1}{(1+T)^2}\right] - \E\left[\frac{T}{(1+T)^2}\right]^2 \geq0$$
thanks to the Cauchy-Schwarz inequality and $\lambda_1 \lambda_2 = 0$ if and only if $T$ is degenerate, which is not the case thanks to 
Lemma \ref{CVYn}. Consequently, $\E[\calB]$ is a positive definite matrix. In the same way, we can prove that $\E[T\calB]$ is also a positive definite matrix. Since the Kronecker product of two positive definite matrices is a positive definite matrix, we deduce from \eqref{decompoL} that $L$ is positive definite as soon as $\sigma_a^2>0$ and 
$\sigma_b^2>0$ or $\rho^2 < \sigma_c^2 \sigma_d^2$ which is the case thanks to \H{eps11}.


\section{Proof of Theorem \ref{CVpstheta}} \label{demoCVpstheta}


We will follow the same approach as in Bercu et al.~\cite{BercuBDSAGP}. For all $n\geq1$, let $\calV_n = M_n^t \Sigma_{n-1}^{-1} M_n = (\wh\theta_n - \theta)^t \Sigma_{n-1} (\wh \theta_n - \theta)$.
First of all, we have
\begin{align*}
 \calV_{n+1} &= M_{n+1}^t \Sigma_n^{-1} M_{n+1} = (M_n + \Delta M_{n+1})^t \Sigma_n^{-1} (M_n + \Delta M_{n+1}),\\
	      &= M_n^t \Sigma_n^{-1} M_n + 2 M_n^t \Sigma_n^{-1} \Delta M_{n+1} + \Delta M_{n+1}^t \Sigma_n^{-1} \Delta M_{n+1},\\
	      &= \calV_n - M_n^t(\Sigma_{n-1}^{-1} - \Sigma_n^{-1})M_n + 2 M_n^t \Sigma_n^{-1} \Delta M_{n+1} + \Delta M_{n+1}^t \Sigma_n^{-1} \Delta M_{n+1}.
\end{align*}

\noindent By summing over this identity, we obtain the main decomposition
\begin{equation}\label{egaliteVn}
 \calV_{n+1} + \calA_n = \calV_1 + \calB_{n+1} + \calW_{n+1}
\end{equation}

\noindent where
$$\calA_n = \sum_{k=1}^n M_k^t(\Sigma_{k-1}^{-1} - \Sigma_k^{-1})M_k,$$
$$\calB_{n+1} = 2 \sum_{k=1}^n M_k^t \Sigma_k^{-1} \Delta M_{k+1} \hspace{10pt} \text{ and } \hspace{10pt} \calW_{n+1} = \sum_{k=1}^n \Delta M_{k+1}^t \Sigma_k^{-1} \Delta M_{k+1}.$$

\begin{Lem} \label{lemCVVnAn}
 Assume that $(\veps_n)$ satisfies \H{eps1} to \H{eps8}. Then, we have
\begin{equation} \label{CVWn}
 \lim_{n\to\infty} \frac{\calW_{n}}{n} = \frac12 tr((I_2 \otimes A)^{-1/2} L (I_2 \otimes A)^{-1/2}) \hspace{20pt} \text{ a.s.}
\end{equation}
where $A$ is the positive matrix given by \eqref{defA}. In addition, we also have
\begin{equation}\label{CVBn}
\calB_{n+1} = o(n) \hspace{20pt} \text{ a.s.}
\end{equation}
and
\begin{equation} \label{CVVnAn}
 \lim_{n\to\infty} \frac{\calV_{n+1} + \calA_n}{n} = \frac12 tr((I_2 \otimes A)^{-1/2} L (I_2 \otimes A)^{-1/2}) \hspace{20pt} \text{ a.s.}
\end{equation}
\end{Lem}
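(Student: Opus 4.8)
The plan is to establish the three limits in Lemma \ref{lemCVVnAn} by exploiting the main decomposition \eqref{egaliteVn}, treating each term via martingale techniques adapted to the doubling-generation structure. The cornerstone is Proposition \ref{cvcrochet}, which gives $<\!M\!>_n / |\T_{n-1}| \to L$ a.s., and the fact that $\Sigma_n = I_2 \otimes S_n$ with $S_n / |\T_n| \to A$ a.s. (the latter being an immediate consequence of Lemma \ref{LFGN} applied to the entries of $\Phi_k \Phi_k^t / c_k$). First I would address the quadratic-variation term $\calW_{n+1} = \sum_{k=1}^n \Delta M_{k+1}^t \Sigma_k^{-1} \Delta M_{k+1}$, which carries the main content. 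The strategy is to replace $\Delta M_{k+1}^t \Sigma_k^{-1} \Delta M_{k+1}$ by its conditional expectation given $\calF_k$, writing

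\begin{equation*}
\calW_{n+1} = \sum_{k=1}^n \E[\Delta M_{k+1}^t \Sigma_k^{-1} \Delta M_{k+1} \mid \calF_k] + \calR_{n+1},
\end{equation*}

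where $\calR_{n+1}$ is a martingale. Using the trace identity $x^t B x = tr(B\, xx^t)$ together with $\E[\Delta M_{k+1} \Delta M_{k+1}^t \mid \calF_k] = L_k$ from \eqref{defLk}, the predictable part becomes $\sum_{k=1}^n tr(\Sigma_k^{-1} L_k)$. Since $\Sigma_k^{-1} = I_2 \otimes S_k^{-1} \sim (I_2 \otimes A)^{-1}/|\T_k|$ and $\sum_{i \in \G_{k}} (\cdot)$ in $L_k$ contributes $L$ after normalisation by $|\T_{k-1}| \sim |\T_k|$, a Cesàro/Toeplitz argument gives

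\begin{equation*}
\frac1n \sum_{k=1}^n tr(\Sigma_k^{-1} L_k) \longrightarrow tr((I_2 \otimes A)^{-1} L) = \frac12\, tr((I_2 \otimes A)^{-1/2} L (I_2 \otimes A)^{-1/2}) \hspace{10pt} \text{a.s.},
\end{equation*}

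where the factor $1/2$ arises because each generation $\G_k$ roughly doubles $|\T_{k-1}|$, so the increment $L_k$ is asymptotically half of the whole normalised bracket. This yields \eqref{CVWn}.

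For the remainder $\calR_{n+1}$ and for the cross term $\calB_{n+1} = 2\sum_{k=1}^n M_k^t \Sigma_k^{-1} \Delta M_{k+1}$, the plan is to invoke the strong law of large numbers for martingales from \cite{Duflo}: both are martingale transforms, and I would bound their predictable quadratic variations. For $\calB_{n+1}$, its bracket is controlled by $\sum_k M_k^t \Sigma_k^{-1} L_k \Sigma_k^{-1} M_k$, which is dominated by a constant times $\sum_k \calV_k / |\T_k|$; combined with the a priori bound $\calV_n = O(n)$ a.s.\ (obtainable from \eqref{egaliteVn} by a Robbins--Siegmund or direct induction argument once $\calW$ is handled) and the summability coming from the geometric growth $|\T_k| \sim 2^{k+1}$, this forces $\calB_{n+1} = o(n)$ a.s., giving \eqref{CVBn}. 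The eighth-moment hypothesis \H{eps8} is exactly what guarantees the requisite integrability for these bounds to hold. Finally, \eqref{CVVnAn} follows by substituting \eqref{CVWn} and \eqref{CVBn} into the decomposition \eqref{egaliteVn}, since $\calV_1$ is fixed and $\calV_{n+1} + \calA_n$ inherits the limit of $\calW_{n+1}$.

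The main obstacle I anticipate is the rigorous justification of the factor $1/2$ and the Cesàro convergence in the computation of $\frac1n \sum_k tr(\Sigma_k^{-1} L_k)$, because the generation-indexed sums $L_k$ and the tree-indexed normalisations $|\T_k|$ interact through the doubling structure; one must show that $tr(\Sigma_k^{-1} L_k) \to \frac12 tr((I_2\otimes A)^{-1/2} L (I_2\otimes A)^{-1/2})$ for each fixed $k$ (not merely in Cesàro average), which requires the individual almost-sure convergences $S_k/|\T_k| \to A$ and $|\G_k|^{-1}\sum_{i\in\G_k}(\cdot) \to$ the matrix $L$, both supplied by Lemma \ref{LFGN}. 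A secondary technical difficulty is verifying that the self-doubling vector martingale $(M_n)$ satisfies the hypotheses of the convergence theorem in \cite{Duflo}, which, as noted in the excerpt, must be adapted since the dimensions of $\Psi_n$ and $\xi_n$ grow with $n$; I would handle this by working with the fixed-dimensional increments $\Delta M_{n+1} = \Psi_n \xi_{n+1}$ rather than with $\xi_{n+1}$ directly, so that all martingale arguments take place in $\R^4$ with $\calF_n$-measurable coefficients.
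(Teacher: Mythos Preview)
Your treatment of $\calW_{n+1}$ is essentially the paper's: decompose into a predictable part $\sum_{k=1}^n tr(\Sigma_k^{-1}L_k)$ plus a martingale remainder, use $\Sigma_k/|\T_k|\to I_2\otimes A$ and $L_k/|\G_k|\to L$ (both from Lemma \ref{LFGN}) together with $|\G_k|/|\T_k|\to 1/2$ to get the Ces\`aro limit, and kill the martingale remainder by the strong law. The paper reorders the two replacements (first freeze $\Sigma_k^{-1}$ at $(I_2\otimes A)^{-1}/|\T_k|$, then center $\Delta M_{k+1}\Delta M_{k+1}^t$ around $L_k$), but the content is the same, and your explanation of the factor $1/2$ is correct.

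The gap is in your handling of $\calB_{n+1}$. Two problems. First, the bound ``$\sum_k M_k^t\Sigma_k^{-1}L_k\Sigma_k^{-1}M_k \leq C\sum_k \calV_k/|\T_k|$'' is not right: since $L_k$ has operator norm of order $|\G_k|\sim|\T_k|/2$ while $\Sigma_k^{-1}$ has operator norm of order $1/|\T_k|$, the product $\Sigma_k^{-1}L_k\Sigma_k^{-1}$ is of order $1/|\T_k|$, i.e.\ comparable to $\Sigma_k^{-1}$ itself, and you only obtain $M_k^t\Sigma_k^{-1}L_k\Sigma_k^{-1}M_k \leq C\,\calV_k$ with no extra $1/|\T_k|$. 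That yields $<\!\calB\!>_n\leq C\sum_{k\leq n}\calV_k$, which even under $\calV_k=O(k)$ gives only $<\!\calB\!>_n=O(n^2)$ and hence $\calB_n=O(n\log n)$, not $o(n)$. Second, the a~priori estimate $\calV_k=O(k)$ is circular: from \eqref{egaliteVn} with $\calW_{n+1}=O(n)$ you still need control on $\calB_{n+1}$ to deduce anything about $\calV_{n+1}$, and neither Robbins--Siegmund (the drift $tr(\Sigma_n^{-1}L_n)$ is not summable) nor a pathwise induction (a bracket bound does not give a pathwise bound on a martingale) closes the loop.

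The paper breaks this circularity with a sharper, purely algebraic bound that you are missing. One first shows the matrix inequality $L_n\leq\alpha\,\Psi_n\Psi_n^t$ for $\alpha=\max(\sigma_a^2+\sigma_b^2,\sigma_c^2+\sigma_d^2)$, by checking that $\alpha c_k I_2-\begin{pmatrix}\sigma_a^2X_k+\sigma_c^2&\rho\\\rho&\sigma_b^2X_k+\sigma_d^2\end{pmatrix}$ is positive definite (this is where $\rho^2<\sigma_c^2\sigma_d^2$ from \H{eps11} enters). Then the Riccati-type identity $\Sigma_n^{-1}\Psi_n\Psi_n^t\Sigma_n^{-1}\leq\Sigma_{n-1}^{-1}-\Sigma_n^{-1}$ (Lemma B.1 of \cite{BercuBDSAGP}) gives $\E[(\Delta\calB_{n+1})^2|\calF_n]\leq 4\alpha\,M_n^t(\Sigma_{n-1}^{-1}-\Sigma_n^{-1})M_n$, whence $<\!\calB\!>_n\leq 4\alpha\,\calA_n$ and therefore $\calB_n=o(\calA_n)$ a.s.\ by the strong law for martingales. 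Feeding this back into \eqref{egaliteVn} yields $\calV_{n+1}+\calA_n=o(\calA_n)+O(n)$, which, since $\calV_{n+1},\calA_n\geq0$, forces $\calA_n=O(n)$ and $\calV_{n+1}=O(n)$ simultaneously; then $\calB_n=o(\calA_n)=o(n)$ and \eqref{CVVnAn} follows. This self-referential use of $\calA_n$ as the bracket bound is the key idea your proposal lacks.
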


\begin{proof}

\noindent First of all, we have $\calW_{n+1} = \calT_{n+1} + \calR_{n+1}$ where
$$\calT_{n+1} = \sum_{k=1}^n \frac{\Delta M_{k+1}^t (I_2 \otimes A)^{-1} \Delta M_{k+1}}{|\T_k|},$$
$$\calR_{n+1} = \sum_{k=1}^n \frac{\Delta M_{k+1}^t (|\T_k| \Sigma_k^{-1} - (I_2 \otimes A)^{-1}) \Delta M_{k+1}}{|\T_k|}.$$

\noindent One can observe that $\calT_{n+1} = tr((I_2 \otimes A)^{-1/2} \calH_{n+1} (I_2 \otimes A)^{-1/2})$ where
$$ \calH_{n+1} = \sum_{k=1}^n \frac{\Delta M_{k+1} \Delta M_{k+1}^t}{|\T_k|}.$$

\noindent Our aim is to make use of the strong law of large numbers for martingale transforms, so we start by adding and subtracting a term involving 
the conditional expectation of $\Delta \calH _{n+1}$ given $\calF_n$. We have thanks to relation \eqref{defLk} that for all 
$n\geq0$, $\E[\Delta M_{n+1} \Delta M_{n+1}^t | \calF_n] = L_n$. Consequently, we can split $\calH_{n+1}$ into two terms
$$\calH_{n+1} = \sum_{k=1}^n \frac{L_k}{|\T_k|} + \calK_{n+1},$$
where 
$$\calK_{n+1} = \sum_{k=1}^n \frac{\Delta M_{k+1} \Delta M_{k+1}^t - L_k}{|\T_k|}.$$
It clearly follows from convergence \eqref{limcrochet} that 
$$ \lim_{n\to\infty} \frac{L_n}{|\T_n|} = \frac12 L \hspace{20pt} \text{ a.s.}$$
Hence, Cesaro convergence immediately implies that
\begin{equation} \label{CesaroLk}
 \lim_{n\to\infty} \frac1n \sum_{k=1}^n \frac{L_k}{|\T_k|} = \frac12 L \hspace{20pt} \text{ a.s.}
\end{equation}
On the other hand, the sequence $(\calK_n)_{n\geq2}$ is obviously a square integrable martingale. Moreover, we have 
$$\Delta \calK_{n+1}= \calK_{n+1} - \calK_n = \frac1{|\T_n|} (\Delta M_{n+1} \Delta M_{n+1}^t - L_n).$$
For all $u\in\R^4$, denote $\calK_n(u) = u^t\calK_n u$. It follows from tedious but straightforward calculations, together with Lemma 
\ref{LFGN}, that the increasing process of the martingale $(\calK_n(u))_{n\geq2}$ satisfies $<\!\calK(u)\!>_n =\calO(n)$ a.s. Therefore, we deduce from 
the strong law of large numbers for martingales that for all $u\in\R^4$, $\calK_n(u) = o(n)$ a.s.~leading to $\calK_n = o(n)$ a.s. Hence, 
we infer from \eqref{CesaroLk} that
\begin{equation} \label{CVHn}
 \lim_{n\to\infty} \frac{\calH_{n+1}}n = \frac12 L \hspace{20pt} \text{a.s.}
\end{equation}

\noindent Via the same arguments as in the proof of convergence \eqref{limcrochet}, we find that
\begin{equation}\label{CVSigman}
\lim_{n\to\infty} \frac{\Sigma_n}{|\T_n|} = I_2 \otimes A \hspace{20pt} \text{a.s.}
\end{equation}
where $A$ is the positive definite matrix given by \eqref{defA}. Then, we obtain from \eqref{CVHn} that
$$ \lim_{n\to\infty} \frac{\calT_n}n = \frac12 tr((I_2 \otimes A)^{-1/2} L (I_2 \otimes A)^{-1/2}) \hspace{20pt} \text{a.s.}$$
which allows us to say that $\calR_n = o(n)$ a.s. leading to \eqref{CVWn} We are now in position to prove \eqref{CVBn}. Let us recall that 
$$\calB_{n+1} = 2 \sum_{k=1}^n M_k^t \Sigma_k^{-1} \Delta M_{k+1} = 2 \sum_{k=1}^n M_k^t \Sigma_k^{-1} \psi_k \xi_{k+1}.$$

\noindent Hence, $(\calB_n)_{n\geq2}$ is a square integrable martingale. In addition, we have
$$\Delta \calB_{n+1} = 2M_n^t\Sigma_n^{-1}\Delta M_{n+1}.$$

\noindent Thus
\begin{align*}
 \E[(\Delta \calB_{n+1})^2 | \calF_n] &= 4 \E[M_n^t\Sigma_n^{-1}\Delta M_{n+1} \Delta M_{n+1}^t\Sigma_n^{-1}M_n | \calF_n] \hspace{20pt} \text{a.s.} \\
	    &= 4 M_n^t\Sigma_n^{-1}\E[\Delta M_{n+1} \Delta M_{n+1}^t| \calF_n]\Sigma_n^{-1}M_n \hspace{20pt} \text{a.s.}\\
	    &= 4 M_n^t\Sigma_n^{-1} L_n \Sigma_n^{-1}M_n \hspace{20pt} \text{a.s.}
\end{align*}

\noindent We can observe that
\begin{equation*}
 L_n = \sum_{k\in\G_n} \frac1{c_k^2}\begin{pmatrix}
                        \sigma_a^2 X_k + \sigma_c^2 & \rho \\
			\rho & \sigma_b^2 X_k + \sigma_d^2
                       \end{pmatrix}
			      \otimes \begin{pmatrix}
			               X_k^2 & X_k \\ X_k & 1
			              \end{pmatrix}
\end{equation*}
and
\begin{equation*}
\psi_n\psi_n^t = \sum_{k\in\G_n} \frac{1}{c_k} I_2 \otimes \begin{pmatrix}
						X_k^2 & X_k \\ X_k & 1
					      \end{pmatrix}.
\end{equation*}
For $\alpha = \max(\sigma_a^2 +\sigma_b^2 , \sigma_c^2 + \sigma_d^2)$, denote
$$\Delta_n = \alpha c_n I_2 - \begin{pmatrix}
                        \sigma_a^2 X_n + \sigma_c^2 & \rho \\
			\rho & \sigma_b^2 X_n + \sigma_d^2
                       \end{pmatrix}.$$
It is not hard to see that $\Delta_n$ is a positive definite matrix. As a matter of fact, we deduce from the elementary inequality
\begin{equation}\label{ineg}
(\sigma_a^2+\sigma_b^2)X_n + \sigma_c+\sigma_d^2 \leq \alpha c_n
\end{equation}
that
$$tr(\Delta_n) = 2\alpha c_n - \left((\sigma_a^2+\sigma_b^2)X_n + \sigma_c^2 + \sigma_d^2 \right) \geq \alpha c_n >0.$$
In addition, we also have from \eqref{ineg} that
\begin{align*}
\det(\Delta_n) &= \left(\alpha c_n - (\sigma_a^2 X_n + \sigma_c^2)\right)\left(\alpha c_n - (\sigma_b^2 X_n +\sigma_d^2)\right) - \rho^2,\\
	&= \alpha^2 c_n^2 - \alpha c_n \left((\sigma_a^2+\sigma_b^2)X_n +\sigma_c^2 +\sigma_d^2\right)\\
	&\hspace{80pt} + (\sigma_a^2X_n + \sigma_c^2)(\sigma_b^2X_n + \sigma_d^2) - \rho^2,\\
	&\geq \sigma_a^2\sigma_b^2X_n^2 + (\sigma_a^2\sigma_d^2 + \sigma_b^2\sigma_c^2)X_n + \sigma_c^2\sigma_d^2 - \rho^2,\\
	&\geq \sigma_c^2\sigma_d^2 - \rho^2 >0
\end{align*}
thanks to \H{eps11}. Consequently,
$$\begin{pmatrix}
                        \sigma_a^2 X_n + \sigma_c^2 & \rho \\
			\rho & \sigma_b^2 X_n + \sigma_d^2
                       \end{pmatrix} \leq \alpha c_n I_2$$
which immediately implies that $L_n \leq \alpha \psi_n\psi_n^t$. Moreover, we can use Lemma B.1 of \cite{BercuBDSAGP} to say that 
$$\Sigma_{n-1}^{-1} \psi_n \psi_n^t \Sigma_n^{-1} \leq \Sigma_{n-1}^{-1} - \Sigma_n^{-1}.$$

\noindent Hence
\begin{align*}
 \E[(\Delta \calB_{n+1})^2 | \calF_n] &= 4 M_n^t\Sigma_n^{-1} L_n \Sigma_n^{-1}M_n \hspace{20pt} \text{a.s.}\\
			&\leq 4 \alpha M_n^t\Sigma_n^{-1} \psi_n\psi_n^t \Sigma_n^{-1}M_n \hspace{20pt} \text{a.s.}\\
			&\leq 4 \alpha M_n^t(\Sigma_{n-1}^{-1} - \Sigma_n^{-1})M_n \hspace{20pt} \text{a.s.}\\
\end{align*}

\noindent leading to $<\!\calB\!>_n \leq 4\alpha \calA_n$. Therefore it follows from the strong law of large numbers for martingales that $\calB_n = o(\calA_n)$. Finally, we deduce from decomposition \eqref{egaliteVn} that 
$$\calV_{n+1} + \calA_n = o(\calA_n) + \calO(n) \hspace{20pt} \text{ a.s.}$$

\noindent leading to $\calV_{n+1} = \calO(n)$ and $\calA_n = \calO(n)$ a.s.~which implies that $\calB_n= o(n)$ a.s. Finally we clearly obtain convergence \eqref{CVVnAn} from the main decomposition \eqref{egaliteVn} together with \eqref{CVWn} and \ref{CVBn}, which completes the proof of Lemma \ref{lemCVVnAn}.
\end{proof}

\begin{Lem}\label{delta}
 Assume that $(\veps_n)$ satisfies \H{eps1} to 
\H{eps8}. For all $\delta > 1/2$, we have
\begin{equation}\label{majdelta}\|M_n\|^2 = o(|\T_n|n^\delta) \hspace{20pt} \text{ a.s.}\end{equation}
\end{Lem}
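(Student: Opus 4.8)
The plan is to exploit that $(M_n)$ is a genuine martingale with values in $\R^4$ whose increasing process grows geometrically: by Proposition \ref{cvcrochet} we have $<\!M\!>_n/|\T_{n-1}|\to L$ a.s., and $\E[\Delta M_{n+1}\Delta M_{n+1}^t|\calF_n]=L_n$ with $\textup{tr}(L_n)$ comparable to $|\G_n|\asymp|\T_{n-1}|$. Thus the last generation alone already carries a fixed proportion of the total conditional variance, so the increments of $(M_n)$ are not asymptotically negligible; in particular neither a central limit theorem nor a law of the iterated logarithm applies to $M_n$ itself, and the classical strong law for martingales (or the bound $\calV_n=\calO(n)$ of Lemma \ref{lemCVVnAn}) only delivers the cruder rate $o(|\T_n|\,n^{1+\gamma})$. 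To reach \emph{every} $\delta>1/2$ I would instead run a fourth-moment estimate together with the Borel--Cantelli lemma along the generation index $n$.

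The heart of the argument is the bound $\E[\|M_n\|^4]=\calO(|\T_{n-1}|^2)$. Since $M_n=\sum_{k=1}^n\Psi_{k-1}\xi_k$ is a vector martingale, the Burkholder--Davis--Gundy inequality gives $\E[\|M_n\|^4]\leq C\,\E\big[(\sum_{k=1}^n\|\Delta M_k\|^2)^2\big]$, and Minkowski's inequality in $L^2$ yields
\[
\Big\|\sum_{k=1}^n\|\Delta M_k\|^2\Big\|_{L^2}\leq\sum_{k=1}^n\big(\E[\|\Delta M_k\|^4]\big)^{1/2}.
\]
It therefore suffices to prove $\E[\|\Delta M_k\|^4]=\calO(|\G_{k-1}|^2)$, for then the geometric sum $\sum_{k\le n}\calO(|\G_{k-1}|)=\calO(|\T_{n-1}|)$ closes the estimate. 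This last bound is the main obstacle and is exactly where the higher-order hypotheses \H{eps4} and \H{eps8} enter. Writing $\Delta M_k=\sum_{i\in\G_{k-1}}\frac1{c_i}(X_iV_{2i},V_{2i},X_iV_{2i+1},V_{2i+1})^t$ as a sum of conditionally centered terms over a single generation, a conditional Rosenthal (Marcinkiewicz--Zygmund) inequality bounds $\E[\|\Delta M_k\|^4\,|\,\calF_{k-1}]$ by $C\big((\sum_{i\in\G_{k-1}}\sigma_i^2)^2+\sum_{i\in\G_{k-1}}\mu_i^4\big)$, where $\sigma_i^2=\calO(1+X_i)$ and $\mu_i^4=\calO(1+X_i^2)$ follow from \eqref{espV2} and \eqref{Ev2k}. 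Taking expectations and using the uniform moment control $\sup_i\E[X_i^4]<\infty$ (obtained exactly as in the proofs of Lemmas \ref{CVYn} and \ref{LFGN}, via \eqref{majEl} and \eqref{majEY8}) gives $\E[\|\Delta M_k\|^4]=\calO(|\G_{k-1}|^2)$, the dominant contribution coming from the $(\sum_i\sigma_i^2)^2$ term.

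With the fourth-moment bound in hand, fix $\delta>1/2$ and $\varepsilon>0$. Markov's inequality gives
\[
\Pro\big(\|M_n\|^2>\varepsilon\,|\T_{n-1}|\,n^{\delta}\big)\leq\frac{\E[\|M_n\|^4]}{\varepsilon^2\,|\T_{n-1}|^2\,n^{2\delta}}=\calO\!\Big(\frac{1}{\varepsilon^2\,n^{2\delta}}\Big).
\]
Because $2\delta>1$, the series $\sum_n n^{-2\delta}$ converges, so the Borel--Cantelli lemma yields $\limsup_n\|M_n\|^2/(|\T_{n-1}|\,n^{\delta})\leq\varepsilon$ a.s.; letting $\varepsilon\to0$ along a countable sequence gives $\|M_n\|^2=o(|\T_{n-1}|\,n^{\delta})$ a.s. Since $|\T_{n-1}|\sim\tfrac12|\T_n|$, this is precisely $\|M_n\|^2=o(|\T_n|\,n^{\delta})$ a.s., which is \eqref{majdelta}. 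The only genuine work lies in the fourth-moment bound of the second paragraph; the rest is a routine Borel--Cantelli argument made possible by the geometric growth $|\T_{n-1}|\asymp2^{n}$, which is exactly what turns the summability threshold into the condition $\delta>1/2$.
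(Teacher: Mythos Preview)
Your proof is correct, but it follows a genuinely different route from the paper's.

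The paper works coordinate by coordinate and writes each scalar component of $M_n$ as a geometrically weighted martingale, e.g.\ $P_n=\sum_{k=1}^n\sqrt{|\G_{k-1}|}\,f_k$ with $f_k=|\G_{k-1}|^{-1/2}\sum_{i\in\G_{k-1}}X_iV_{2i}/c_i$. It then shows the \emph{almost sure} conditional fourth-moment bound $\sup_n\E[f_{n+1}^4\mid\calF_n]<\infty$ (using the a.s.\ law of large numbers of Lemma~\ref{LFGN} to control $|\G_n|^{-1}\sum_{i\in\G_n}c_i^2$) and concludes by invoking Wei's Lemma, a ready-made result from \cite{Wei} tailored precisely to such geometrically weighted sums with bounded conditional fourth moments.

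Your argument replaces Wei's Lemma by an \emph{$L^4$} estimate: Burkholder--Davis--Gundy on $(M_n)$, a conditional Rosenthal inequality on each generation, and then Markov plus Borel--Cantelli along the (logarithmically short) generation index. This is more self-contained --- it avoids citing Wei's Lemma --- and it trades almost-sure conditional control for expectation control. The price is that you must justify the unconditional moment input, namely that $\sum_{i\in\G_{k-1}}\E[(1+X_i)^2]=\calO(|\G_{k-1}|)$; this is indeed available from the paper (it is exactly $|\G_{k-1}|\,\E[(1+Y_k)^2]$, bounded via \eqref{majEY8}), though it is weaker than the $\sup_i\E[X_i^4]<\infty$ you cite. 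One small remark: only second moments of $X_i$ are actually needed for your Rosenthal step, so the appeal to fourth moments is harmless overkill. Both approaches hinge on the same underlying phenomenon --- a fourth-moment control at the generation scale combined with the geometric growth $|\T_{n-1}|\asymp2^n$ that makes the $\delta>1/2$ threshold appear --- but the paper packages this through Wei's Lemma while you unpack it explicitly via Borel--Cantelli.
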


\begin{proof}
Let us recall that
$$M_n = \sum_{k\in\T_{n-1}} \frac1{c_k} \begin{pmatrix} X_kV_{2k} \\ V_{2k} \\ X_kV_{2k+1} \\V_{2k+1}\end{pmatrix}.$$
Denote
$$\begin{array}{ccccc}P_n = \displaystyle \sum_{k\in\T_{n-1}} \frac{X_kV_{2k}}{c_k}  & & \text{ and } & & \displaystyle Q_n = \sum_{i\in\T_{n-1}} \frac{V_{2k}}{c_k}.\end{array}$$
On the one hand, $P_n$ can be rewritten as
$$\begin{array}{ccccc}\displaystyle{ P_n = \sum_{k=1}^n \sqrt{|\G_{k-1}|} f_k } & & \text{ where } & & \displaystyle f_n = \frac1{\sqrt{|\G_{n-1}|}}\sum_{k\in\G_{n-1}} \frac{X_kV_{2k}}{c_k}.\end{array}$$
We already saw in Section 3 that for all $k\in\G_n$,
$$\begin{array}{ccccccc} \E[V_{2k}|\calF_n] = 0 & & \text{ and } & & \E[V_{2k}^2|\calF_n] = \sigma_a^2X_k+\sigma_c^2 & & \text{a.s.} \end{array}$$
In addition, for all $k\in\G_n$, $\E[V_{2k}V_{2k+1}|\calF_n]=\rho$ and
$$\E[V_{2k}^4|\calF_n] = 3\sigma_a^4X_k^2 + X_k(\mu_a^4-3\sigma_a^4+6\sigma_a^2\sigma_c^2)+\mu_c^4 \hspace{20pt} \text{a.s.}$$
which implies that
\begin{equation}\label{majV2k4} \E[V_{2k}^4|\calF_n] \leq \mu_{ac}^4 c_k^2 \hspace{20pt} \text{a.s.}.\end{equation}
where $\mu_{ac}^4 = \mu_a^4+\mu_c^4+6\sigma_a^2\sigma_c^2$. Consequently, $\E[f_{n+1}|\calF_n]=0$ a.s.~and we deduce from \eqref{majV2k4} together with the Cauchy-Schwarz inequality that
\begin{align}
\E[f_{n+1}^4|\calF_n] &= \frac1{|\G_n|^2} \sum_{k\in\G_n} \left(\frac{X_k}{c_k}\right)^4 \E[V_{2k}^4|\calF_n]\nonumber\\
& \hspace{40pt} + \frac3{|\G_n|^2} \sum_{k\in\G_{n}}\sum_{\substack{l\in\G_{n} \\ l\neq k}}      \left(\frac{X_k}{c_k}\right)^2 \left(\frac{X_l}{c_l}\right)^2 \E[V_{2k}^2|\calF_n] \E[V_{2l}^2|\calF_n] \hspace{20pt} \text{a.s.}\nonumber\\
&\leq \frac{\mu_{ac}^4}{|\G_n|^2}(1+3\sqrt{|\G_n|(|\G_n|-1)})\sum_{k\in\G_n}c_k^2 \hspace{20pt} \text{a.s.} \nonumber\\
&\leq \frac{3\mu_{ac}^4}{|\G_n|} \sum_{k\in\G_n}c_k^2 \hspace{20pt} \text{a.s.} \label{majfn}
\end{align}
However, it follows from Lemma \ref{LFGN} that
$$\lim_{n\to\infty} \frac1{|\T_n|} \sum_{k\in\T_n} c_k^2 = \E[(1+T)^2] \hspace{20pt} \text{a.s.}$$
which is equivalent to say that
\begin{equation}\label{limck2}
\lim_{n\to\infty} \frac1{|\G_n|} \sum_{k\in\G_n} c_k^2 = \E[(1+T)^2] \hspace{20pt} \text{a.s.}
\end{equation}
Therefore, we infer from \eqref{majfn} and \eqref{limck2} that
$$\sup_{n\geq0}\E[f_{n+1}^4|\calF_n]<\infty \hspace{20pt} \text{a.s.}$$
Hence, we obtain from Wei's Lemma given in \cite{Wei} page 1672 that for all $\delta > 1/2$,
$$P_n^2 = o(|\T_{n-1}|n^\delta) \hspace{20pt} \text{a.s.}$$
On the other hand, $Q_n$ can be rewritten as
$$\begin{array}{ccccc}\displaystyle{ Q_n = \sum_{k=1}^n \sqrt{|\G_{k-1}|} g_k } & & \text{ where } & & \displaystyle g_n = \frac1{\sqrt{|\G_{n-1}|}}\sum_{k\in\G_{n-1}} \frac{V_{2k}}{c_k}.\end{array}$$
Via the same calculation as before, $\E[g_{n+1}|\calF_n] = 0$ a.s.~and, as $c_n\geq1$,
$$\E[g_{n+1}^4|\calF_n]\leq \frac{3\mu_{bd}^4}{|\G_n|} \sum_{k\in\G_n}\frac1{c_k^2}\leq 3\mu_{bd}^4 \hspace{20pt} \text{a.s.}$$
Hence, we deduce once again from Wei's Lemma that for all $\delta > 1/2$,
$$Q_n^2 = o(|\T_{n-1}|n^\delta) \hspace{20pt} \text{a.s.}$$
In the same way, we obtain the same result for the two last components of $M_n$ which completes the proof of Lemma \ref{delta}.
\end{proof}

\noindent {\bf Proof of Theorem \ref{CVpstheta}.} We recall from \eqref{difftheta} that $\wh \theta_n-\theta = \Sigma_{n-1}^{-1} M_n$ which implies
$$\|\wh \theta_n - \theta \|^2 \leq \frac{\calV_n}{\lambda_{min}(\Sigma_{n-1})}$$
where $\calV_n = M_n^t \Sigma_{n-1}^{-1} M_n$. On the one hand, it follows from \eqref{CVVnAn} that $\calV_n=\calO(n)$ a.s. On the other hand, we deduce from \eqref{CVSigman} that
$$\lim_{n\to\infty} \frac{\lambda_{min}(\Sigma_n)}{|\T_n|} = \lambda_{min}(A) >0 \hspace{20pt} \text{a.s.}$$
Consequently, we find that
$$\|\wh \theta_n - \theta \|^2 = \calO\left(\frac{n}{|\T_{n-1}|}\right) \hspace{20pt} \text{a.s.}$$

\noindent We are now in position to prove the quadratic strong law \eqref{quadratic1}. First of all a direct application of Lemma \ref{delta} ensures that $\calV_n = o(n^\delta)$ a.s.~for all $\delta > 1/2$. Hence, we obtain from \eqref{CVVnAn} that
\begin{equation}\lim_{n\to\infty} \frac{\calA_n}n = \frac12 tr((I_2 \otimes A)^{-1/2} L (I_2 \otimes A)^{-1/2}) \hspace{20pt} \text{ a.s.}\label{CVAn/n}\end{equation}

\noindent Let us rewrite $\calA_n$ as 
$$\calA_n = \sum_{k=1}^n M_k^t\left(\Sigma_{k-1}^{-1} - \Sigma_k^{-1}\right) M_k = \sum_{k=1}^n M_k^t\Sigma_{k-1}^{-1/2} \Delta_k \Sigma_{k-1}^{-1/2} M_k$$

\noindent where $\Delta_k = I_4 - \Sigma_{k-1}^{1/2}\Sigma_{k}^{-1}\Sigma_{k-1}^{1/2}$. We already saw from \eqref{CVSigman} that 
$$\lim_{n\to\infty} \frac{\Sigma_n}{|\T_n|} = I_2\otimes A \hspace{20pt} \text{a.s.}$$
which ensures that
$$\displaystyle \lim_{n\to\infty} \Delta_n = \frac12 I_4 \hspace{20pt} \text{a.s.}$$
In addition, we deduce from \eqref{CVVnAn} that $\calA_n=\calO(n)$ a.s.~which implies that
\begin{equation}\frac{\calA_n}n = \left(\frac1{2n}\sum_{k=1}^nM_k^t\Sigma_{k-1}^{-1}M_k\right) + o(1) \hspace{20pt} \text{ a.s.}\label{eqAn/n}\end{equation}

\noindent Moreover we have
\begin{align}
 \frac1n\sum_{k=1}^nM_k^t\Sigma_{k-1}^{-1}M_k &= \frac1n \sum_{k=1}^n (\wh \theta_k - \theta)^t\Sigma_{k-1}(\wh \theta_k - \theta), \nonumber\\
	  &= \frac1n \sum_{k=1}^n |\T_{k-1}| (\wh \theta_k - \theta)^t\frac{\Sigma_{k-1}}{|\T_{k-1}|}(\wh \theta_k - \theta), \nonumber\\
	  &= \frac1n \sum_{k=1}^n |\T_{k-1}| (\wh \theta_k - \theta)^t(I_2 \otimes A)(\wh \theta_k - \theta) + o(1) \hspace{20pt} \text{ a.s.} \label{elementQSL}
\end{align}

\noindent Therefore, \eqref{CVAn/n} together with \eqref{eqAn/n} and \eqref{elementQSL} lead to \eqref{quadratic1}. 

\section{Proof of Theorem \ref{CVpsvar}} \label{demoCVpsvar}

First of all, we shall only prove \eqref{vitesseeta} since the proof of \eqref{vitesseetad} follows exactly the same lines. We clearly have from \eqref{esteta} that
\begin{align} 
Q_{n-1}(\wh \eta_{n} - \eta_{n}) &= \sum_{k\in\T_{n-1}} \frac1{d_k}(\wh V_{2k}^2 - V_{2k}^2) \Phi_k,\nonumber\\
	&=\sum_{l=0}^{n-1}\sum_{k\in\G_{l}} \frac1{d_k}(\wh V_{2k}^2 - V_{2k}^2) \Phi_k,\nonumber\\
	&= \sum_{l=0}^{n-1} \sum_{k\in\G_l} \frac1{d_k}\left((\wh V_{2k} - V_{2k})^2 + 2(\wh V_{2k} - V_{2k})V_{2k}\right) \Phi_k.\label{diffeta}
\end{align}
In addition, we already saw in Section 3 that for all $l\geq0$ and $k\in\G_l$,
$$\wh V_{2k} - V_{2k} = -\begin{pmatrix} \wh a_l-a \\ \wh c_l - c \end{pmatrix}^t \Phi_k.$$
Consequently,
$$(\wh V_{2k} - V_{2k})^2 \leq \|\Phi_k\|^2 \left((\wh a_l-a )^2 + (\wh c_l-c )^2\right).$$
Hence, we obtain that
\begin{align}
\left\|\sum_{l=0}^{n-1} \sum_{k\in\G_l} \frac{(\wh V_{2k} - V_{2k})^2}{d_k} \Phi_k\right\| &\leq \sum_{l=0}^{n-1} \sum_{k\in\G_l} \frac{\|\Phi_k\|^3}{d_k} \left((\wh a_l-a )^2 + (\wh c_l-c )^2\right),\nonumber\\
	&\leq \sum_{l=0}^{n-1} \left((\wh a_l-a )^2 + (\wh c_l-c )^2\right) \sum_{k\in\G_l} c_k,\nonumber\\
	&\leq \sum_{l=0}^{n-1} \left((\wh a_l-a )^2 + (\wh c_l-c )^2\right) |\T_{l-1}| \frac1{|\T_{l-1}|}\sum_{k\in\G_l} c_k. \label{majterme1}
\end{align}
Moreover, we can deduce from Lemma \ref{LFGN} that 
\begin{equation}
\lim_{n\to\infty} \frac1{|\T_{n-1}|}\sum_{k\in\G_n}c_k = \E[1+T] \hspace{20pt} \text{a.s.} \label{limck}
\end{equation}
Then, we find from \eqref{majterme1} and \eqref{limck} that 
$$\left\|\sum_{l=0}^{n-1} \sum_{k\in\G_l} \frac{(\wh V_{2k} - V_{2k})^2}{d_k} \Phi_k\right\| = \calO\left(\sum_{l=0}^{n-1} |\T_{l-1}| \left((\wh a_l-a )^2 + (\wh c_l-c )^2\right)\right) \hspace{20pt} \text{a.s.}$$
However, as $\Lambda$ is positive definite, we obtain from \eqref{quadratic1} that 
$$\sum_{l=0}^{n-1} |\T_{l-1}| \left((\wh a_l-a )^2 + (\wh c_l-c )^2\right) = \calO(n) \hspace{20pt} \text{a.s.}$$
which implies that 
\begin{equation}
\left\|\sum_{l=0}^{n-1} \sum_{k\in\G_l} \frac{(\wh V_{2k} - V_{2k})^2}{d_k} \Phi_k\right\| = \calO(n) \hspace{20pt} \text{a.s.} \label{Oterme1}
\end{equation}
Furthermore, denote
$$P_n = \sum_{l=0}^{n-1} \sum_{k\in\G_l} \frac{(\wh V_{2k} - V_{2k})V_{2k}}{d_k} \Phi_k.$$
We clearly have
\begin{align*}
\Delta P_{n+1} &= P_{n+1} - P_n = \sum_{k\in\G_n} \frac{(\wh V_{2k} - V_{2k})V_{2k}}{d_k} \Phi_k,\\
	&=-\sum_{k\in\G_n} \frac{V_{2k}}{d_k} \Phi_k \Phi_k^t \begin{pmatrix} \wh a_l-a \\ \wh c_l - c \end{pmatrix}.
\end{align*}
In addition, for all $k\in\G_n$, $\E[V_{2k}|\calF_n] = 0$ a.s.~and $\E[V_{2k}^2|\calF_n] = \sigma_a^2 X_k +\sigma_c^2 \leq \alpha c_k$ a.s.~where $\alpha=\max(\sigma_a^2,\sigma_c^2)$. Consequently, $\E[\Delta P_{n+1} |\calF_n] = 0$ a.s.~and
\begin{align*}
\E[\Delta P_{n+1} \Delta P_{n+1}^t |\calF_n] &= \sum_{k\in\G_n}\frac1{d_k^2}\E[V_{2k}^2|\calF_n] \Phi_k\Phi_k^t \begin{pmatrix} \wh a_l-a \\ \wh c_l - c \end{pmatrix} \begin{pmatrix} \wh a_l-a \\ \wh c_l - c \end{pmatrix}^t \Phi_k \Phi_k^t \hspace{20pt} \text{a.s.}\\
	&= \sum_{k\in\G_n}\frac{\sigma_a^2 X_k +\sigma_c^2}{d_k^2} \Phi_k\Phi_k^t \begin{pmatrix} \wh a_l-a \\ \wh c_l - c \end{pmatrix} \begin{pmatrix} \wh a_l-a \\ \wh c_l - c \end{pmatrix}^t \Phi_k \Phi_k^t \hspace{20pt} \text{a.s.}
\end{align*}
Therefore, $(P_n)$ is a square integrable vector martingale with increasing process $<\!P\!>_n$ given by
\begin{align*}
<\!P\!>_n &= \sum_{l=1}^{n-1} \E[\Delta P_{l+1} \Delta P_{l+1}^t |\calF_l] \hspace{20pt} \text{a.s.}\\
	&= \sum_{l=1}^{n-1} \sum_{k\in\G_l}\frac{\sigma_a^2 X_k +\sigma_c^2}{d_k^2} \Phi_k\Phi_k^t \begin{pmatrix} \wh a_l-a \\ \wh c_l - c \end{pmatrix} \begin{pmatrix} \wh a_l-a \\ \wh c_l - c \end{pmatrix}^t \Phi_k \Phi_k^t \hspace{20pt} \text{a.s.}
\end{align*}
It immediately follows from the previous calculation that
\begin{align*}
\|<\!P\!>_n\| &\leq \alpha \sum_{l=0}^{n-1} \left((\wh a_l-a )^2 + (\wh c_l-c )^2\right)\sum_{k\in\G_l}\frac{\|\Phi_k\|^4 c_k}{d_k^2} \hspace{20pt} \text{a.s.}\\
&\leq \alpha \sum_{l=0}^{n-1} \left((\wh a_l-a )^2 + (\wh c_l-c )^2\right)\sum_{k\in\G_l}c_k \hspace{20pt} \text{a.s.}
\end{align*}
leading to
$$\|<\!P\!>_n\| = \calO(n) \hspace{20pt} \text{a.s.}$$
Then, we deduce from the strong law of large numbers for martingale given e.g.~in Theorem 1.3.15 of \cite{Duflo} that
\begin{equation}
P_n = o(n) \hspace{20pt} \text{a.s.} \label{oPn}
\end{equation}
Hence, we find from \eqref{diffeta}, \eqref{Oterme1} and \eqref{oPn} that
$$\|Q_{n-1}(\wh \eta_{n} - \eta_{n})\| = \calO(n) \hspace{20pt} \text{a.s.}$$
Moreover, we infer once again from Lemma \ref{LFGN} that
\begin{equation}
\lim_{n\to\infty} \frac1{|\T_n|}Q_n = \E\left[\begin{pmatrix} \frac{T^2}{(1+T)^2} & \frac{T}{(1+T)^2} \\ \frac{T}{(1+T)^2} & \frac{1}{(1+T)^2} \end{pmatrix}\right] \hspace{20pt} \text{ a.s.} \label{CVQ}
\end{equation}
which ensures that
$$\|\wh \eta_{n} - \eta_{n}\| = \calO\left(\frac n{|\T_{n-1}|}\right) \hspace{20pt} \text{a.s.}$$

\noindent It remains to establish \eqref{vitesserho}. Denote
$$\begin{array}{ccccc}
\wh W_n = \begin{pmatrix} \wh V_{2n}\\ \wh V_{2n+1} \end{pmatrix} & & \text{ and } & & R_n = \displaystyle\sum_{k\in\T_{n-1}} \left( \wh W_k - W_k \right)^t J W_k
\end{array}$$
where
$$J = \begin{pmatrix} 0 & 1 \\ 1 & 0 \end{pmatrix}.$$
Then, we have
$$|\T_{n-1}| (\wh \rho_n - \rho_n) = \sum_{k\in\T_{n-1}} \left(\wh V_{2k} - V_{2k} \right)\left(\wh V_{2k+1} - V_{2k+1} \right)  + R_n.$$
 It is not hard to see that $(R_n)$ is a square integrable real martingale with increasing process given by
\begin{align*}
<\!R\!>_n &= \sum_{l=0}^{n-1} \sum_{k\in\G_l} \E\left[\left. (\wh W_k - W_k)^t J W_kW_k^t J (\wh W_k - W_k) \right|\calF_n \right] \hspace{20pt} \text{a.s.}\\
	  &= \sum_{l=0}^{n-1} \sum_{k\in\G_l} (\wh W_k - W_k)^t J \E\left[\left. W_kW_k^t\right|\calF_n \right] J (\wh W_k - W_k) \hspace{20pt} \text{a.s.}\\
	  &= \sum_{l=0}^{n-1} \sum_{k\in\G_l} (\wh W_k - W_k)^t J \begin{pmatrix} \sigma_a^2 X_k + \sigma_c^2 & \rho \\ \rho & \sigma_b^2 X_k + \sigma_d^2 \end{pmatrix} J (\wh W_k - W_k) \hspace{20pt} \text{a.s.}\\
	  &= \sum_{l=0}^{n-1} \sum_{k\in\G_l} (\wh W_k - W_k)^t \begin{pmatrix} \sigma_b^2 X_k + \sigma_d^2 & \rho \\ \rho & \sigma_a^2 X_k + \sigma_c^2 \end{pmatrix} (\wh W_k - W_k) \hspace{20pt} \text{a.s.}
\end{align*}
Consequently,
\begin{align*}
<\!R\!>_n &\leq \sum_{l=0}^{n-1} \sum_{k\in\G_l} \left((\sigma_a^2 +\sigma_b^2) X_k + \sigma_c^2 +\sigma_d^2 \right) \|\wh W_k - W_k\|^2 \hspace{20pt} \text{a.s.}\\
	&\leq 2\beta \sum_{l=0}^{n-1} \left((\wh a_l-a )^2 + (\wh b_l-b )^2\right) \sum_{k\in\G_l}X_k^2 c_k\\
 	& \hspace{40pt} + 2\beta \sum_{l=0}^{n-1} \left((\wh c_l-c )^2 + (\wh d_l-d )^2\right) \sum_{k\in\G_l} c_k \hspace{20pt} \text{a.s.}
\end{align*}
where $\beta=\max(\sigma_a^2+\sigma_b^2,\sigma_c^2+\sigma_d^2)$. As previously, we obtain through Lemma \ref{LFGN} together with \eqref{quadratic1} that $<\!R\!>_n=\calO(n)$ a.s.~which ensures that $R_n=o(n)$ a.s. Moreover,
\begin{align*}
\left|\sum_{k\in\T_{n-1}} \left(\wh V_{2k} - V_{2k} \right)\left(\wh V_{2k+1} - V_{2k+1} \right)\right| &\leq \frac12 \sum_{k\in\T_{n-1}} \left(\left(\wh V_{2k} - V_{2k} \right)^2+\left(\wh V_{2k+1} - V_{2k+1} \right)^2\right),\\
	&\leq \frac12 \sum_{l=0}^{n-1} \|\wh \theta_l - \theta\|^2 \sum_{k\in\G_l} (1+X_k^2)
\end{align*}
which implies via Lemma \ref{LFGN} and \eqref{quadratic1} that
$$\sum_{k\in\T_{n-1}} \left(\wh V_{2k} - V_{2k} \right)\left(\wh V_{2k+1} - V_{2k+1} \right) = \calO(n) \hspace{20pt} \text{a.s.}$$
Therefore, we obtain that
$$|\T_{n-1}| (\wh \rho_n - \rho_n) = \calO(n) \hspace{20pt} \text{a.s.}$$
which leads to \eqref{vitesserho}.
Finally, it only remains to prove the a.s.~convergence of $\eta_n$, $\zeta_n$ and $\rho_n$ to $\eta$, $\zeta$ and $\rho$ which will immediately lead to the a.s.~convergence of $\wh \eta_n$, $\wh \zeta_n$ and $\wh \rho_n$ through \eqref{vitesseeta}, \eqref{vitesseetad} and \eqref{vitesserho}, respectively. On the one hand,
\begin{equation}
Q_{n-1} (\eta_n-\eta) = N_n = \sum_{k\in\T_n} \frac1{d_k} \Phi_k v_{2k} \label{diffetan}
\end{equation}
where we recall that $v_{2n} = V_{2n}^2 - \eta^t\Phi_n$. It is clear that $(N_n)$ is a square integrable vector martingale with increasing process $<\!N\!>_n$ given by
$$<\!N\!>_n = \sum_{l=0}^{n-1} \sum_{k\in\G_l} \frac1{d_k^2} \Phi_k \Phi_k^t (2 \sigma_a^4 X_k^2 + (\mu_a^4-3\sigma_a^4+4\sigma_a^2\sigma_c^2)X_k + \mu_c^4 - \sigma_c^4) \hspace{20pt} \text{a.s.}$$
Hence,
$$<\!N\!>_n \leq \gamma \sum_{l=0}^{n-1} \sum_{k\in\G_l} \frac1{d_k} \Phi_k \Phi_k^t \hspace{20pt} \text{a.s.}$$
where $\gamma = \mu_a^4 -\sigma_a^4 + 4\sigma_a^2\sigma_c^2 + \mu_c^4 -\sigma_c^4$, which implies that
$$\|<\!N\!>_n\| = \calO(|\T_{n-1}|) \hspace{20pt} \text{a.s.}$$
Consequently,
$$\|N_n\|^2 = \calO(n|\T_{n-1}|) \hspace{20pt} \text{a.s.}$$
which leads via \eqref{CVQ} and \eqref{diffetan} to the a.s.~convergence of $\eta_n$ to $\eta$ and to the rate of convergence of Remark \ref{remrate}. The proof of the a.s.~convergence of $\zeta_n$ to $\zeta$ follows exactly the same lines. On the other hand
\begin{equation}
|\T_{n-1}|(\rho_n-\rho) = H_n = \sum_{k\in\T_{n-1}}(V_{2k}V_{2k+1} - \rho) \label{diffrhon}
\end{equation}
It is obvious to see that $(H_n)$ is a square integrable real martingale with increasing process $<\!H\!>_n$ such that $<\!H\!>_n= \calO(|\T_{n-1}|)$ a.s. Finally, as $H_n^2 = \calO(n|\T_{n-1}|)$ a.s., we deduce from \eqref{diffrhon} that $\rho_n$ goes a.s.~to $\rho$ and that the rate of convergence of Remark \ref{remrate} is verified, which completes the proof of Theorem \ref{CVpsvar}.

\section{Proof of Theorem \ref{TCL}}\label{demoTCL}


In order to establish the asymptotic normality of our estimators, we will extensively make use of the central limit theorem for triangular arrays of vector martingales given e.g.~by Theorem 2.1.9 of \cite{Duflo}. First of all, instead of using the generation-wise filtration $(\calF_n)$, we will use the sister pair-wise filtration $(\calG_n)$ given by 
$$\calG_n = \sigma(X_1,(X_{2k},X_{2k+1}),1\leq k \leq n ).$$

\noindent {\bf Proof of Theorem \ref{TCL}, first part.} We focus our attention to the proof of the asymptotic normality \eqref{TCLtheta}. Let $M^{(n)} = (M_k^{(n)})$ be the square integrable vector martingale defined as
\begin{equation}
M_k^{(n)} = \frac1{\sqrt{|\T_n|}}\sum_{i=1}^k D_i \label{defnewmart}
\end{equation}
where
$$D_i = \frac1{c_i}\begin{pmatrix}X_iV_{2i}\\V_{2i}\\X_iV_{2i+1}\\V_{2i+1}\end{pmatrix}.$$
We clearly have
\begin{equation}
M_{t_n}^{(n)} = \frac1{\sqrt{|\T_n|}}\sum_{i=1}^{t_n} D_i = \frac1{\sqrt{|\T_n|}} M_{n+1} \label{liennewmart}
\end{equation}
where $t_n=|\T_n|$. Moreover, the increasing process associated to $(M_k^{(n)})$ is given by
\begin{align*}
<\!M^{(n)}\!>_{k} &= \frac1{|\T_n|} \sum_{i=1}^k \E\left[D_iD_i^t|\calG_{i-1}\right],\\
&= \frac1{|\T_n|} \sum_{i=1}^k \frac1{c_i^2} \begin{pmatrix}
                                \sigma_a^2X_i+\sigma_c^2 & \rho\\\rho & \sigma_b^2X_i+\sigma_d^2\end{pmatrix} \otimes \begin{pmatrix}X_i^2 & X_i\\X_i & 1 \end{pmatrix} \hspace{20pt} \text{a.s.}
\end{align*}
Consequently, it follows from convergence \eqref{limcrochet} that
$$\lim_{n\to\infty} <\!M^{(n)}\!>_{t_n} = L \hspace{20pt} \text{a.s.}$$
It is now necessary to verify Lindeberg's condition by use of Lyapunov's condition. Denote
$$\phi_n = \sum_{k=1}^{t_n} \E\left[\left.\|M_k^{(n)} - M_{k-1}^{(n)} \|^4 \right| \calG_{k-1}\right].$$
We obtain from \eqref{defnewmart} that
\begin{align*}
\phi_n &= \frac1{|\T_n|^2} \sum_{k=1}^{t_n} \E\left[\left.\frac{(1+X_k^2)^2}{(c_k)^4}(V_{2k}^2 + V_{2k+1}^2)^2\right|\calG_{k-1}\right],\\
	&\leq \frac2{|\T_n|^2} \sum_{k=1}^{t_n} \left(\E[V_{2k}^4|\calG_{k-1}] + \E[V_{2k+1}^4|\calG_{k-1}]\right).
\end{align*}
In addition, we already saw in Section \ref{demoCVpstheta} that
$$\E[V_{2n}^4|\calG_{n-1}] \leq \mu_{ac}^4 c_n^2, ~~~~~ \E[V_{2n+1}^4|\calG_{n-1}] \leq \mu_{bd}^4 c_n^2 \hspace{20pt} \text{a.s.}$$
where $\mu_{ac}^4 = \mu_a^4+\mu_c^4 + 6 \sigma_a^2\sigma_c^2$ and $\mu_{bd}^4 = \mu_b^4+\mu_d^4 + 6 \sigma_b^2\sigma_d^2$. Hence,
$$\phi_n \leq \frac{2\mu^4}{|\T_n|^2} \sum_{k=1}^{t_n} c_k^2 \hspace{20pt} \text{a.s.}$$
where $\mu^4 = \mu_{ac}^4 + \mu_{bd}^4$. We can deduce from Lemma \ref{LFGN} that
$$\lim_{n\to\infty} \frac1{|\T_n|} \sum_{k\in\T_n} c_k^2 = \E[(1+T)^2] \hspace{20pt} \text{a.s.}$$
which immediately implies that
$$\lim_{n\to\infty} \phi_n = 0 \hspace{20pt} \text{a.s.}$$
Therefore, Lyapunov's condition is satisfied and Theorem 2.1.9 of \cite{Duflo} allows us to say via \eqref{liennewmart} that
$$\frac1{\sqrt{|\T_{n-1}|}} M_n \liml \calN(0,L).$$
Finally, we infer from \eqref{difftheta} together with \eqref{CVSigman} and Slutsky's lemma that
$$\phantom{\square} \hspace{74pt} \sqrt{|\T_{n-1}|}(\wh\theta_n - \theta) \liml \calN(0,(I_2\otimes A^{-1})L(I_2\otimes A^{-1})).\hspace{74pt} \square$$

{\bf Proof of Theorem \ref{TCL}, second part.} We shall now establish the asymptotic normality given by \eqref{TCLeta}. Denote by $N^{(n)} = (N_k^{(n)})$ the square integrable vector martingale defined as
\begin{equation*}
N_k^{(n)} = \frac1{\sqrt{|\T_n|}}\sum_{i=1}^k \frac{v_{2i}}{d_i} \Phi_i.
\end{equation*}
We immediately see from \eqref{diffetan} that
\begin{equation}N_{t_n}^{(n)} = \frac1{\sqrt{|\T_n|}} Q_n(\eta_{n+1}-\eta) = \frac1{\sqrt{|\T_n|}} N_{n+1}.\label{lienN}\end{equation}
In addition, the increasing process associated to $(N_k^{(n)})$ is given by
\begin{align*}
<\!N^{(n)}\!>_k &= \frac1{|\T_n|} \sum_{i=1}^k \E\left[\left.\frac{v_{2i}^2}{d_i^2} \Phi_i\Phi_i^t \right| \calG_{i-1}\right],\\
	&= \frac1{|\T_n|} \sum_{i-1}^k \frac1{d_i^2} \Phi_i \Phi_i^t (2 \sigma_a^4 X_i^2 + (\mu_a^4-3\sigma_a^4+4\sigma_a^2\sigma_c^2)X_i + \mu_c^4 - \sigma_c^4) \hspace{20pt} \text{a.s.}
\end{align*}
Consequently, we obtain from Lemma \ref{LFGN} that
$$\lim_{n\to\infty} <\!N^{(n)}\!>_{t_n} = \E\left[\frac{2 \sigma_a^4 T^2 + (\mu_a^4-3\sigma_a^4+4\sigma_a^2\sigma_c^2)T + (\mu_c^4 - \sigma_c^4)}{(1+T)^4} \begin{pmatrix} T^2 & T \\ T & 1 \end{pmatrix} \right]=M_{ac} \hspace{10pt} \text{a.s.}$$
In order to verify Lyapunov's condition, let
$$\phi_n = \sum_{k=1}^{t_n} \E\left[\left.\|N_k^{(n)} - N_{k-1}^{(n)}\|^3 \right| \calG_{k-1}\right].$$
We clearly have
$$\|N_k^{(n)} - N_{k-1}^{(n)}\|^2 = \frac1{|\T_n|} \frac{(1+X_k^2)v_{2k}^2}{d_k^2} \leq \frac1{|\T_n|} \frac{v_{2k}^2}{d_k},$$
which implies that
$$\|N_k^{(n)} - N_{k-1}^{(n)}\|^3 \leq \frac1{|\T_n|^{3/2}} \frac{|v_{2k}|^3}{d_k^{3/2}}.$$
However,
\begin{align}
|v_{2k}|^3 &= |V_{2k}^2 - \sigma_a^2X_k - \sigma_c^2|^3 \leq (V_{2k}^2 + \sigma_a^2X_k + \sigma_c^2)^3 \nonumber\\
	&\leq V_{2k}^6 + 3V_{2k}^4(\sigma_a^2X_k + \sigma_c^2) + 3V_{2k}^2(\sigma_a^2X_k + \sigma_c^2)^2 + (\sigma_a^2X_k + \sigma_c^2)^3 \label{majv2k3}
\end{align}
We already saw that $\E[V_{2k}^2|\calG_{k-1}] = \sigma_a^2 X_k + \sigma_c^2$ a.s.~and it follows from \eqref{majV2k4} that
$$\E[V_{2k}^4|\calG_{k-1}]\leq \mu_{ac} c_k^2 \hspace{20pt} \text{a.s.}$$
It only remains to study $\E[V_{2k}^6|\calG_{k-1}]$. Denote
$$\begin{array}{ccccc}
A_k=\displaystyle\sum_{i=1}^{X_k} (Y_{k,i}-a) & & \text{ and } & & B_k=\veps_{2k}-c.
\end{array}$$
We clearly have from the identity $V_{2k} = A_k+B_k$ that
\begin{multline}\label{EV2k6}
\E[V_{2k}^6|\calG_{k-1}] = \E[A_k^6|\calG_{k-1}] + 15\E[A_k^4|\calG_{k-1}]\E[B_k^2|\calG_{k-1}]\\ + 20 \E[A_k^3|\calG_{k-1}]\E[B_k^3|\calG_{k-1}] + \E[A_k^2|\calG_{k-1}]\E[B_k^4|\calG_{k-1}] + \E[B_k^6|\calG_{k-1}].
\end{multline}
On the one hand, $\E[A_k^2|\calG_{k-1}]= \sigma_a^2 X_k$ a.s.~and
$$\E[A_k^4|\calG_{k-1}] = \mu_a^4 X_k + 3X_k(X_k-1)\sigma_a^4 \hspace{20pt} \text{a.s.}$$
Moreover, we have from Cauchy-Schwarz inequality that
$$\left|\E[A_k^3|\calG_{k-1}]\right| \leq \mu_a^2 \sigma_a X_k \hspace{20pt} \text{a.s.}$$
Furthermore, it follows from tedious but straightforward calculations that
\begin{multline*}
\E[A_k^6|\calG_{k-1}] \leq \tau_a^6X_k + 15 X_k (X_k-1) \mu_a^4\sigma_a^2 + 15\sigma_a^6 X_k(X_k-1)(X_k-2) \\+10\mu_a^6X_k(X_k-1) \hspace{20pt} \text{a.s.}
\end{multline*}
Then, it exists some constant $\alpha>0$ such that
$$\E[A_k^6|\calG_{k-1}] \leq \alpha c_k^3 \hspace{20pt} \text{a.s.}$$
On the other hand, $\E[B_k^2|\calG_{k-1}]=\sigma_c^2$ a.s.~and $\E[B_k^4|\calG_{k-1}]=\mu_c^4$ a.s. In addition
$$\begin{array}{ccccccc}
\big|\E[B_k^3|\calG_{k-1}]\big| \leq \mu_c^2\sigma_c & & \text{and} & & \E[B_k^6|\calG_{k-1}] \leq \tau_c^6 & & \text{a.s.}
\end{array}$$
Consequently, we deduce from \eqref{EV2k6} that it exists some constant $\beta>0$ such that
$$\E[V_{2k}^6|\calG_{k-1}] \leq \beta c_k^3 \hspace{20pt} \text{a.s.}$$
which implies from \eqref{majv2k3} that for some constant $\gamma>0$,
$$\E[|v_{2k}|^3|\calG_{k-1}] \leq \gamma c_k^3 \hspace{20pt} \text{a.s.}$$
Then, as $c_k^2=d_k$, we can conclude that
$$\phi_n \leq \frac\gamma{\sqrt{|\T_n|}} \hspace{20pt} \text{a.s.}$$
which immediately leads to
$$\lim_{n\to\infty} \phi_n = 0 \hspace{20pt} \text{a.s.}$$
Therefore, Lyapunov's condition is satisfied and we find from Theorem 2.1.9 of \cite{Duflo} and \eqref{lienN} that
\begin{equation}\label{limN}
\frac1{\sqrt{|\T_{n-1}|}}N_n \liml \calN(0,M_{ac}).
\end{equation}
Hence, we obtain from \eqref{CVQ}, \eqref{limN} and Slutsky's lemma that
$$\sqrt{|\T_{n-1}|}(\eta_n-\eta) \liml \calN(0,B^{-1}M_{ac} B^{-1}).$$
Finally, \eqref{vitesseeta} ensures that
$$\sqrt{|\T_{n-1}|}(\wh\eta_n-\eta) \liml \calN(0,B^{-1}M_{ac} B^{-1}).$$
The proof of \eqref{TCLetad} follows exactly the same lines. \hfill $\square$

{\bf Proof of Theorem \ref{TCL}, third part.} It remains to establish the asymptotic normality given by \eqref{TCLrho}. Denote by $H^{(n)} = (H_k^{(n)})$ the square integrable martingale defined as
\begin{equation} \label{lienH}
H_k^{(n)} = \frac1{\sqrt{|\T_n|}} \sum_{i=1}^k (V_{2i}V_{2i+1} - \rho).
\end{equation}
We clearly have from \eqref{diffrhon} that
$$H_{t_n}^{(n)} = \sqrt{|\T_n|} (\rho_{n+1} -\rho) = \frac1{\sqrt{|\T_n|}}H_{n+1}.$$
Moreover, the increasing process of $(H_k^{(n)})$ is given by
$$<\!H^{(n)}\!>_k = \frac1{|\T_n|} \sum_{i=1}^k\left(\E[V_{2i}^2V_{2i+1}^2|\calG_{n-1}] - \rho^2\right).$$
As before, let
$$\begin{array}{ccccc}
C_k=\displaystyle\sum_{i=1}^{X_k} (Z_{k,i}-b) & & \text{ and } & & B_k=\veps_{2k+1}-d.
\end{array}$$
As $V_{2k}=A_k+B_k$ and $V_{2k+1}=C_k+D_k$, we clearly have
\begin{multline*} \E\left[\left.V_{2k}^2V_{2k+1}^2\right| \calG_{k-1}\right] = \E\left[\left. A_k^2 \right| \calG_{k-1}\right] \left(\E\left[\left. C_k^2 \right| \calG_{k-1}\right] + \E\left[\left. D_k^2 \right| \calG_{k-1}\right]\right) \\
	     + \E\left[\left. B_k^2 \right| \calG_{k-1}\right]\E\left[\left. C_k^2 \right| \calG_{k-1}\right]
		+ \E\left[\left. B_k^2D_k^2 \right| \calG_{k-1}\right] \hspace{20pt} \text{a.s.}
\end{multline*}
Consequently,
\begin{equation}\label{EV2V2}
\E\left[\left.V_{2k}^2V_{2k+1}^2\right| \calG_{k-1}\right] = \sigma_a^2\sigma_b^2X_k^2 + \left(\sigma_a^2\sigma_d^2 + \sigma_b^2\sigma_c^2\right)X_k + \nu^2 \hspace{20pt} \text{a.s.}
\end{equation}
Then, we deduce once again from Lemma \ref{LFGN} that
$$\lim_{n\to\infty} <\!H^{(n)}\!>_{t_n} =\sigma_{\rho}^2 \hspace{20pt} \text{a.s.}$$
where $\sigma_\rho^2$ is given by \eqref{sigmarho}. One can observe that
\begin{align*}
\E[T^2] &= \E\left[\left(\sum_{k=2}^\infty a_2 \circ \hdots \circ a_{k-1} \circ e_k\right)^2 \right],\\
	&= \sum_{k=2}^\infty \E\left[\left( a_2 \circ \hdots \circ a_{k-1} \circ e_k\right)^2 \right]\\
	&\hspace{90pt} + \sum_{k=2}^\infty \sum_{\substack{l=2 \\ l\neq k}}^\infty \E\left[ a_2 \circ \hdots \circ a_{k-1} \circ e_k \right]\E\left[ a_2 \circ \hdots \circ a_{l-1} \circ e_l \right].
\end{align*}
Moreover, we have thanks to calculations of Section \ref{preuve lemme CVYn}  and \ref{demoLFGN} that
$$\E\left[\left( a_2 \circ \hdots \circ a_{k-1} \circ e_k\right)^2 \right] = \Upsilon\overline{c}\left(\overline{a}^{k-2} - \overline{a^2}^{k-2}\right) + \overline{a^2}^{k-2}\overline{c^2},$$
$$\E\left[ a_2 \circ \hdots \circ a_{k-1} \circ e_k \right] =\overline{a}^{k-2} \overline{c}.$$
Hence
\begin{align*}
\E[T^2] &= \frac{\Upsilon\overline{c}}{1-\overline{a}} + \frac{\overline{c^2} - \Upsilon \overline{c}}{1-\overline{a^2}} + \overline{c^2}\left( \frac1{(1-\overline{a})^2} - \frac1{1-\overline{a}^2} \right),\\
	&= \frac{\Upsilon\overline{c}}{1-\overline{a}} + \frac{\overline{c^2} - \Upsilon \overline{c}}{1-\overline{a^2}} + \frac{2\overline{a}(\overline{c}^2)}{(1-\overline{a})(1-\overline{a}^2)}.
\end{align*}
In order to verify Lyapunov's condition, denote
$$\phi_n = \sum_{k=1}^{t_n} \E\left[\left.|H_k^{(n)} - H_{k-1}^{(n)}|^3\right|\calG_{k-1}\right].$$
We obtain from \eqref{lienH} that
\begin{align}
\phi_n &= \frac1{|\T_n|^{3/2}} \sum_{k=1}^{t_n} \E\left[\left.|V_{2k}V_{2k+1}-\rho|^3\right| \calG_{k-1}\right],\nonumber\\
	&\leq \frac1{|\T_n|^{3/2}} \sum_{k=1}^{t_n}\left(\E\left[\left.|V_{2k}|^3|V_{2k+1}|^3 \right| \calG_{k-1}\right] + 3 |\rho| \E\left[\left.V_{2k}^2V_{2k+1}^2 \right| \calG_{k-1}\right]\right.\label{majphi}\\
&\hspace{170pt} +\left.3\rho^2\E\left[\left.|V_{2k}||V_{2k+1}| \right| \calG_{k-1}\right]+|\rho|^3\right).\nonumber
\end{align}
It follows from Cauchy-Schwarz inequality together with the previous calculations that it exists two constants $\alpha,\beta>0$ such that
$$\E\left[\left.|V_{2k}||V_{2k+1}| \right| \calG_{k-1}\right] \leq \alpha c_k \hspace{20pt} \text{a.s.}$$
and
$$\E\left[\left.|V_{2k}|^3|V_{2k+1}|^3 \right| \calG_{k-1}\right] \leq \beta c_k^3 \hspace{20pt} \text{a.s.}$$
In addition, we already saw from \eqref{EV2V2} that for some constant $\gamma>0$
$$\E\left[\left.V_{2k}^2V_{2k+1}^2 \right| \calG_{k-1}\right] \leq \gamma c_k^2 \hspace{20pt} \text{a.s.}$$
Consequently, we obtain from \eqref{majphi} that for some constant $\delta>0$
$$\phi_n \leq \frac\delta{|\T_n|^{3/2}} \sum_{k=1}^{t_n} c_k^3 \hspace{20pt} \text{a.s.}$$
which, via Lemma \eqref{LFGN}, leads to
$$\lim_{n\to\infty} \phi_n =0 \hspace{20pt} \text{a.s.}$$
Hence, we can conclude that
$$H_{t_n}^{(n)} \liml \calN(0,\sigma_\rho^2).$$
In other words
$$\sqrt{|\T_{n-1}|} (\rho_{n} -\rho) \liml \calN(0,\sigma_\rho^2).$$
Finally, we find via \eqref{vitesserho} that
$$\sqrt{|\T_{n-1}|} (\wh\rho_{n} -\rho) \liml \calN(0,\sigma_\rho^2)$$
which achieves the proof of Theorem \ref{TCL}. \hfill $\square$\\

{\bf Acknowledgement.} I would like to thank Bernard Bercu for his helpful suggestions and for thorough readings of the paper.
\nocite{*}
\bibliographystyle{acm}
\bibliography{BINAR}

\end{document}